\newcommand{\abs}[1]{\left|#1\right|}
\newcommand{\field}[1]{\mathbb{#1}}
\newcommand{\A}{\field{A}}
\newcommand{\B}{\field{B}}
\newcommand{\C}{\field{C}}
\newcommand{\Z}{\field{Z}}
\newcommand{\cA}{{\cal A}}
\newcommand{\cB}{{\cal B}}
\newcommand{\cC}{{\cal C}}
\newcommand{\cF}{{\cal F}}
\newcommand{\cH}{{\cal H}}
\newcommand{\cG}{{\cal G}}
\newcommand{\blda}{{\mathbf{a}}}
\newcommand{\bldb}{{\mathbf{b}}}
\newcommand{\bldc}{{\mathbf{c}}}
\newcommand{\blde}{{\mathbf{e}}}
\newcommand{\bldu}{{\mathbf{u}}}
\newcommand{\bldv}{{\mathbf{v}}}
\newcommand{\bldx}{{\mathbf{x}}}
\newcommand{\bldy}{{\mathbf{y}}}
\newcommand{\bldz}{{\mathbf{z}}}
\newcommand{\support}{{\mathsf{supp}}}
\newcommand{\distance}{{\mathsf{d}}}
\newcommand{\weight}{{\mathsf{wt}}}
\newcommand{\bfzero}{{\bf 0}}
\newcommand{\wt}[1]{ \text{wt} ({#1}) }
\newtheorem{defn}{Definition}
\newtheorem{theorem}{Theorem}
\newtheorem{lemma}{Lemma}
\newtheorem{corollary}{Corollary}
\begin{document}

\bibliographystyle{plain}

\title{
\begin{center}
Maximum Size $t$-Intersecting Families and Anticodes
\end{center}
}
\author{
{\sc Xuan Wang}\thanks{Department School of Mathematical Sciences, Anhui University, Hefei 230601, China,
e-mail: {\tt wang$\_$xuan$\_$ah@163.com}.} \and
{\sc Tuvi Etzion}\thanks{Department of Computer Science, Technion, Israel Institute of Technology, Haifa 3200003, Israel,
e-mail: {\tt etzion@cs.technion.ac.il}.} \and
{\sc Denis S. Krotov}\thanks{Sobolev Institute of Mathematics, Novosibirsk 630090, Russia,
e-mail: {\tt krotov@math.nsc.ru}.} \and
{\sc Minjia Shi}\thanks{Department School of Mathematical Sciences, Anhui University, Hefei 230601, China,
e-mail: {\tt smjwcl.good@163.com}.}}

\maketitle

\begin{abstract}
The maximum size of $t$-intersecting families is one of the most celebrated topics in combinatorics,
and its size is known as the Erd\H{o}s-Ko-Rado theorem. Such intersecting families, also
known as constant-weight anticodes in coding theory, were considered in a generalization of the well-known sphere-packing bound.
In this work we consider the maximum size of $t$-intersecting families and their associated
maximum size constant-weight anticodes over alphabet of size $q >2$. It is proved that the structure of the maximum size
constant-weight anticodes with the same length, weight, and diameter, depends
on the alphabet size. This structure implies some hierarchy of constant-weight anticodes.
\end{abstract}

\vspace{0.5cm}

\noindent {\bf Keywords:} intersecting families, constant-weight anticodes.


\newpage
\section{Introduction}
\label{sec:introduction}

A system $\cA$ of $k$-subsets of an $n$-set is {\bf \emph{$t$-intersecting}} if
$$
\abs{A_1 \cap A_2} \geq t ~~ \text{for~all} ~~ A_1,A_2 \in \cA
$$
Such a system is called a {\bf \emph{$t$-intersecting family}}.

Finding the size of the largest system among all the $\binom{n}{k}$ $k$-subsets is one of the
most intriguing combinatorial problems, initiated by Erd\"{o}s-Ko-Rado~\cite{EKR61}.
During the years, the problem for $t \geq  1$ was considered and many
interesting results were found, e.g.~\cite{FrFu80,FrTo99}. The intersecting problem is also considered for an $n$-set,
but with unrestricted subsets instead of $k$-subsets.
Wilson~\cite{Wil84} gave an exact bound for the Erd\"{o}s-Ko-Rado theorem.
\begin{theorem}
\label{thm:Wilson}
If $n \geq (t+1)(k-t+1)$ then any $t$-intersecting family of $k$-subsets from an $n$-set contains at most $\binom{n-t}{k-t}$ subsets.
The bound is attained by all $k$-subsets (of an $n$-set) that contain a fixed $t$-subset.
If $n > (t+1)(k-t+1)$ this family is unique and if $n = (t+1)(k-t+1)$ there is another family with the same parameters and
the same size.
\end{theorem}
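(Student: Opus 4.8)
The plan is to follow Wilson's spectral strategy: realize a $t$-intersecting family as an independent set in a suitable graph on the Johnson scheme and bound its size by the Delsarte linear-programming (ratio) bound. Concretely, let $J(n,k)$ be the Johnson scheme whose points are the $k$-subsets of an $n$-set, with relations $A_0,\dots,A_k$ where $A_j$ records $\abs{x\cap y}=k-j$. A family $\cA$ is $t$-intersecting precisely when no two of its members lie in $\bigcup_{j\ge k-t+1}A_j$, i.e. $\cA$ is a coclique in the graph $G$ with adjacency matrix $M=\sum_{j\ge k-t+1}A_j$ (more generally, one may work with any positive combination $M=\sum_j c_jA_j$ supported on the forbidden distances). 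First I would record the spectrum of $J(n,k)$ via the Eberlein polynomials, so that every $A_j$, and hence $M$, is simultaneously diagonalized on the common eigenspaces $V_0,\dots,V_k$ of the scheme.

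Second, I would apply the ratio bound in its weighted (LP) form: choosing the multipliers $c_j$ to optimize the resulting ratio, the inequality $\abs{\cA}\le \binom{n}{k}\cdot\frac{-\lambda_{\min}}{d-\lambda_{\min}}$ must be driven down to $\binom{n-t}{k-t}$, where $d$ is the common degree. The arithmetic here is the crux of converting the spectral inequality into the exact EKR value, and it is exactly the hypothesis $n\ge (t+1)(k-t+1)$ that guarantees the optimal choice makes the bound governed by the eigenspace $V_t$, yielding the clean ratio $\binom{n-t}{k-t}/\binom{n}{k}$. Tightness is then immediate: the star $\cA_0=\mathset{x\colon S\subseteq x}$ for a fixed $t$-set $S$ is $t$-intersecting and has exactly $\binom{n-t}{k-t}$ members, so the bound is attained.

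Finally, the delicate part is uniqueness together with the boundary case, which I would extract from the equality conditions of the ratio bound. Equality forces the characteristic vector $\chi_\cA-\tfrac{\abs{\cA}}{\binom{n}{k}}\mathbf{1}$ to lie in the $\lambda_{\min}$-eigenspace, hence $\chi_\cA\in V_0\oplus V_t$; I would then translate this algebraic constraint into combinatorial rigidity, showing that a $0/1$ vector of this form with the correct weight must be the indicator of a star $\cA_0$. When $n>(t+1)(k-t+1)$ the extremal eigenspace is isolated and this forces uniqueness. The main obstacle is precisely this step, namely certifying that no non-star $0/1$ vector survives in $V_0\oplus V_t$; I expect to need either a direct analysis of the standard module or a compression/shifting argument (replacing $\cA$ by its $(i,j)$-compressions, which preserve size and the $t$-intersecting property and eventually reduce to a stable family). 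When $n=(t+1)(k-t+1)$ the bound degenerates, and I would exhibit the second extremal family $\cF_1=\mathset{x\colon \abs{x\cap T}\ge t+1}$ for a fixed $(t+2)$-set $T$: a short Vandermonde/inclusion–exclusion count gives $\abs{\cF_1}=(t+2)\binom{n-t-2}{k-t-1}+\binom{n-t-2}{k-t-2}=\binom{n-t}{k-t}$ at this value of $n$, while $\cF_1$ is manifestly not a star, establishing non-uniqueness.
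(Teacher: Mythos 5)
You should first note that the paper itself contains no proof of this statement: Theorem~\ref{thm:Wilson} is imported verbatim from Wilson~\cite{Wil84}, so the comparison here is against Wilson's original argument, whose architecture your outline reproduces correctly. The parts you actually execute are right: the star $\cA_0$ attains $\binom{n-t}{k-t}$; your boundary family $\cF_1=\mathset{x : \abs{x\cap T}\ge t+1}$ with $\abs{T}=t+2$ is genuinely $t$-intersecting (two members meet inside $T$ in at least $(t+1)+(t+1)-(t+2)=t$ points), is not a star, and the identity $(t+2)\binom{n-t-2}{k-t-1}+\binom{n-t-2}{k-t-2}=\binom{n-t}{k-t}$ does hold exactly at $n=(t+1)(k-t+1)$ (e.g.\ $t=1,k=3,n=6$ gives $9+1=10=\binom{5}{2}$).

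However, as a proof the proposal has a genuine gap: the two steps you defer with ``I would'' and ``I expect'' are the entire content of Wilson's theorem. First, the unweighted ratio bound applied to $M=\sum_{j\ge k-t+1}A_j$ does \emph{not} give the EKR value; Wilson's contribution is the construction of one specific matrix $\Omega$ supported on the forbidden relations whose least eigenvalue is attained on $V_t$ with exactly the ratio $\binom{n-t}{k-t}/\binom{n}{k}$, and verifying its spectrum requires nontrivial binomial-coefficient identities --- ``choosing the $c_j$ to optimize'' names the problem rather than solving it. Moreover your restriction to \emph{positive} combinations is both unnecessary (the Hoffman-type argument only needs $M$ symmetric, zero on the diagonal and on non-forbidden pairs, since then $\chi_\cA^{T}M\chi_\cA=0$) and potentially fatal, as it is not clear the optimum over nonnegative weights reaches $\binom{n-t}{k-t}$ at all. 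Second, for uniqueness, equality in the ratio bound only places $\chi_\cA$ in $V_0\oplus W$ where $W$ is the \emph{full} $\lambda_{\min}$-eigenspace; you must separately show $W=V_t$ when $n>(t+1)(k-t+1)$ --- and note that at $n=(t+1)(k-t+1)$ your own family $\cF_1$ achieves equality, so $\lambda_{\min}$ is necessarily attained on an additional eigenspace there, which is a consistency check your sketch never confronts. The rigidity step (a $0/1$ vector in $V_0\oplus V_t$ of weight $\binom{n-t}{k-t}$ must be a star) is itself a substantial lemma, and your fallback via $(i,j)$-compressions is unreliable for \emph{uniqueness}: compressions preserve size and the $t$-intersecting property, but they can map non-stars to stars, so ``reduce to a stable family'' destroys exactly the information uniqueness requires unless you add a careful reverse-tracking argument. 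As it stands you have a correct roadmap of Wilson's proof with its crux points flagged but unfilled.
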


The bound of Theorem~\ref{thm:Wilson} was already proved in~\cite{EKR61} for $t=1$. A survey for the large amount of research associated
with this problem for the first 22 years was given by Deza and Frankl~\cite{DeFr83}.

The intersection problem was completely solved by Ahlswede and Khachatrian~\cite{AhKh97}, when they considered $n < (t+1)(k-t+1)$.
Later Ahlswede and Khachatrian~\cite{AhKh98} observed that the intersection problem is strongly connected to
the diametric problem in the Hamming spaces. The diametric problem is an important problem in coding theory and it was considered
in various metric spaces in many papers~\cite{BuEt15,Etz11,Etz22,EGRW16,SXK23,TaSc10}.
The problem has found an application also in coding theory as was first observed by Delsarte~\cite{Del73}.
A {\bf \emph{code}} of length $n$ is a set of words over some alphabet whose length is $n$.
A $t$-intersecting family is called an anticode (a type of code) and it was used to improve the well-known sphere-packing bound with
a bound called the code-anticode bound.
This bound was used later by Roos~\cite{Roo83} to eliminate the possible existence of perfect codes in the Johnson scheme
for various parameters. Ahlswede, Aydinian, and Khachatrian~\cite{AAK01} defined the concept
of diameter perfect code based on this bound. They have discussed diameter perfect codes in the Hamming and the Johnson scheme, and mentioned
also the Grassmann scheme.

\begin{defn}
For two words $\bldx=(\bldx_1,\bldx_2,\ldots,\bldx_n)$ and $\bldy=(\bldy_1,\bldy_2,\ldots,\bldy_n)$, over an alphabet $\Sigma_q$ with
$q \geq 2$ letters, the Hamming distance $\distance (\bldx,\bldy)$ is the number of coordinates in which $\bldx$ and $\bldy$ differ, i.e.,
$$
\distance (\bldx,\bldy) \triangleq \abs{\{ i ~:~ \bldx_i \neq \bldy_i, ~ 1 \leq i \leq n \}}.
$$

W.l.o.g. (without loss of generality) we assume that our alphabet $\Sigma_q$ is $\Z_q$ which contains the integers
in the set $\{ i ~:~ 0 \leq i \leq q-1\}$.

The {\bf \emph{weight}}, $\weight(\bldx)$, of a word $\bldx=(\bldx_1,\bldx_2,\ldots,\bldx_n)$, over an alphabet $\Z_q$
with $q \geq 2$ letters, is the number of nonzero entries in $\bldx$, i.e.,
$$
\weight(\bldx) \triangleq \abs{ \{i ~:~ \bldx_i \neq 0, ~ 1 \leq i \leq n \} }.
$$
In other words, the weight of $\bldx$ is its distance from the allzero word, $\bfzero$, i.e., $\weight(\bldx)=\distance(\bldx,\bfzero)$.
\end{defn}

Let $\cF$ be a set of $k$-subsets of an $n$-set. When the $k$-subsets of $\cF$ are represented by words of length $n$,
the outcome is a binary code $\cC$ whose codewords have constant-weight $k$.

The minimum distance of the code $\cC$ is the least distance between any two distinct codewords of $\cC$.
The constant-weight code $\cC$ is referred to as an $(n,d,k)_q$ code, where $n$ is the length of the codewords, $k$ is their weight,
and $d$ is the minimum distance of $\cC$. If $\cC$ is a code associated with a $t$-intersecting family, then
we are interested in the maximum distance $D$ between any codewords in $\cC$.
The maximum distance of the code is called the {\bf \emph{diameter}} of $\cC$ and such a set $\cC$ is called an {\bf \emph{anticode}}.
A constant-weight anticode with diameter $D$, codewords of length $n$ and weight $k$ will be referred to as an $(n,D,k)_q$ anticode.
The code-anticode bound was given first by Delsarte~\cite{Del73} and was further discussed
in~\cite{AAK01} for schemes based on distance-regular graphs. In this paper we are interested in constant-weight codes
over $\Z_q$, where $q>2$. In this case
the metric is not based on a distance-regular graph. The proof for this version
is presented in~\cite{Etz22,Etz22b}.
\begin{theorem}
\label{thm:code-anticode}
If $\cC$ is an $(n,d,k)_q$ code and $\cA$ is an $(n,d-1,k)_q$ anticode, then
\begin{equation}
\label{eq:code_anti_bound}
\abs{\cC} \cdot \abs{\cA} \leq \binom{n}{k}(q-1)^k .
\end{equation}
\end{theorem}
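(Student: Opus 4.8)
The plan is to recognize the right-hand side $\binom{n}{k}(q-1)^k$ as the cardinality of the ambient space $X$ of all weight-$k$ words of length $n$ over $\Z_q$ (choose the $k$ nonzero positions in $\binom{n}{k}$ ways, then assign each of them a nonzero symbol in $(q-1)^k$ ways), and then to prove the bound by a transitive-group averaging argument. Concretely, I would exhibit a group $G$ of weight-preserving isometries acting transitively on $X$, observe that every image $g(\cA)$ of the anticode is again an anticode of the same diameter $d-1$, and count the incidences between $\cC$ and the translates $g(\cA)$ in two ways.

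For the group, I would take $G$ to be generated by the permutations of the $n$ coordinates together with, in each coordinate independently, the permutations of the nonzero symbols $\{1,\dots,q-1\}$ (each fixing $0$); thus $\abs{G}=n!\,((q-1)!)^n$. Each such generator preserves the Hamming distance, since a bijection applied coordinatewise can neither create nor destroy a disagreement, and it preserves the weight, since $0$ is fixed and hence a coordinate is nonzero before the map exactly when it is nonzero after. So every $g\in G$ is an isometry of $X$. Transitivity is then immediate: given two weight-$k$ words, a coordinate permutation matches their supports and the per-coordinate symbol permutations match the individual nonzero entries. By the orbit-stabilizer theorem, for any fixed $a,c\in X$ the number of $g\in G$ with $g(a)=c$ equals $\abs{G}/\abs{X}$.

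The crux is the following local bound. Because $\cA$ has diameter $d-1$, any two words of $g(\cA)$ lie at distance at most $d-1<d$; but any two distinct codewords of $\cC$ lie at distance at least $d$. Hence each translate contains at most one codeword, i.e. $\abs{\cC\cap g(\cA)}\le 1$ for every $g\in G$, so that $\sum_{g\in G}\abs{\cC\cap g(\cA)}\le\abs{G}$. Evaluating the same sum by fixing a codeword first, note $c\in g(\cA)$ iff $g^{-1}(c)\in\cA$, and the uniform count above shows each $c$ lies in exactly $\abs{\cA}\cdot\abs{G}/\abs{X}$ of the translates. Therefore $\abs{\cC}\cdot\abs{\cA}\cdot\abs{G}/\abs{X}\le\abs{G}$, and cancelling $\abs{G}/\abs{X}$ yields $\abs{\cC}\cdot\abs{\cA}\le\abs{X}=\binom{n}{k}(q-1)^k$.

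The main obstacle I anticipate is purely in setting up the symmetry correctly for $q>2$: ordinary Hamming translations are \emph{not} admissible, since they do not preserve the weight $k$, so one must restrict to the subgroup fixing the zero symbol in each coordinate and then confirm that this smaller group still acts transitively on the weight-$k$ sphere. Once the isometry property and transitivity are in hand, the uniformity of the incidence count, namely that each codeword meets the same number of translates, follows from the orbit-stabilizer theorem, and the remainder is the double-counting displayed above.
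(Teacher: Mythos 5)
Your proof is correct: the group $G$ of coordinate permutations composed with per-coordinate permutations of the nonzero symbols (fixing $0$) is indeed a weight-preserving isometry group acting transitively on the sphere of weight-$k$ words, the orbit-stabilizer count is right, and the double counting with $\abs{\cC \cap g(\cA)} \leq 1$ yields the bound. The paper itself gives no proof of Theorem~\ref{thm:code-anticode}, deferring to~\cite{Etz22,Etz22b}, and your transitive-group averaging argument is essentially the same proof given there for this non-distance-regular setting, so this matches the intended route.
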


\begin{defn}
The bound in Equation~\textup{(\ref{eq:code_anti_bound})} is the {\bf \emph{code-anticode bound}}.
A code $\cC$ which satisfies Equation~\textup{(\ref{eq:code_anti_bound})} with equality is called a
{\bf \emph{diameter perfect constant-weight code}}.
The anticode~$\cA$ is called a {\bf \emph{maximum size anticode}}.
\end{defn}

Equation~\textup{(\ref{eq:code_anti_bound})} is the motivation to consider constant-weight anticodes (as well as their combinatorial interest)
over $\Z_q$, $q>2$.
The goal of the current work is to consider $t$-intersecting families and anticodes over a non-binary alphabet, where words have length $n$ and
weight $k$.

\begin{defn}
\label{dfn:ktq_intersect}
A {\bf \emph{ $(t,k)_q$-intersecting family}} $\cF$ is a set of words of length $n$ over $\Z_q$ such that for each
two words $\bldx=(\bldx_1,\bldx_2,\ldots,\bldx_n)$ and $\bldy=(\bldy_1,\bldy_2,\ldots,\bldy_n)$ in $\cF$ the number of equal nonzero
entries is at least $t$, i.e.,
$$
\abs{ \{i ~:~ \bldx_i = \bldy_i ~~ \textup{and} ~~ \bldx_i \neq 0, ~ 1 \leq i \leq n \} } \geq t .
$$
\end{defn}

There is another definition for $(t,k)_q$-intersecting family, different from
Definition~\ref{dfn:ktq_intersect}.  In this definition, e.g.~\cite{AAK01,AhKh98}, the
requirement $\bldx_i \neq 0$ is dropped, i.e., the intersection includes coordinates where the two words have \emph{zeros}.
We use Definition~\ref{dfn:ktq_intersect} since it is also a generalization of $k$-intersecting family in the binary case
and moreover optimal $(t,k)_q$-intersecting families defines in Section~\ref{sec:intersect} are optimal constant-weight anticodes with small diameter.
Definition~\ref{dfn:ktq_intersect} yields small diameter when $t$ and $k$ are fixed, while $n$ is large as it is required here.
In addition, it is interesting to note that by using Definition~\ref{dfn:ktq_intersect} a $(t,k)_q$-intersecting family
is equivalent to a $t$-intersecting family in the Johnson scheme
$J(n(q-1),k)$ where we can choose only the transversal $k$-subsets,
with respect to the fixed partition of the base $n(q-1)$-set into $n$ subsets of size $q-1$.
These $t$-intersecting families in $J(n(q-1),k)$ are anticodes with diameter $k-t$ with the Johnson distance (which is half of the Hamming distance).
However, when $k=n$, i.e., there are no \emph{zeros} in the words, the two definitions coincide and the following results
were obtained for this case.

The first important result for $k=n$ was proved by Frankl and F\"{u}redi~\cite{FrFu80}.
\begin{theorem}
\label{thm:FFintersect}
If $\cF$ be a maximum size $(t,k)_q$-intersecting family with words of length $n$, where $k=n$ and $t \geq 15$, then
$$
\abs{\cF} = (q-1)^{n-t} ~~ \text{if~and~only~if} ~~ q \geq t+1 .
$$
\end{theorem}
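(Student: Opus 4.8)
The plan is to reduce the statement to the Frankl--F\"uredi theorem for integer sequences and then to isolate the threshold by a short comparison of two extremal families. Since $k=n$, no word of a $(t,k)_q$-intersecting family has a zero entry, so $\cF$ is a family of $(q-1)$-ary sequences in $\{1,2,\dots,q-1\}^n$, and the condition of Definition~\ref{dfn:ktq_intersect} says precisely that any two members agree in at least $t$ coordinates. Equivalently, via the transversal correspondence noted after Definition~\ref{dfn:ktq_intersect}, $\cF$ is a $t$-intersecting family of transversal $n$-subsets in $J(n(q-1),n)$. The trivial construction that fixes the entries in $t$ prescribed coordinates and lets the remaining $n-t$ coordinates range freely is $(t,k)_q$-intersecting and has size $(q-1)^{n-t}$; this is the quantity in the statement and supplies the easy lower bound.

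For the exact value I would work with the candidate optima. For $i\ge 0$, a fixed set $S$ of $t+2i$ coordinates, and a fixed pattern on $S$, put
\[
\cF_i=\{\,x:\ x\text{ agrees with the pattern in at least }t+i\text{ coordinates of }S\,\}.
\]
Two members agree in at least $2(t+i)-(t+2i)=t$ coordinates of $S$, so each $\cF_i$ is $(t,k)_q$-intersecting, with $\abs{\cF_0}=(q-1)^{n-t}$. The decisive input is the complete-intersection (Frankl--F\"uredi / Ahlswede--Khachatrian) theorem for sequences \cite{FrFu80,AhKh97}: for $n$ large relative to $t$, a maximum $(t,k)_q$-intersecting family with $k=n$ has size $\max_{i\ge 0}\abs{\cF_i}$ and is, up to the obvious symmetries, one of the families $\cF_i$. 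I would invoke this as the principal black box; the hypothesis $t\ge 15$ is what makes the sequence form of this theorem available in the range needed here.

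It then remains to decide when this maximum is attained at $i=0$. First I would record that the sequence $\abs{\cF_i}$ is unimodal in $i$ (a standard log-concavity computation for these binomial sums), so that if $\abs{\cF_0}\ge\abs{\cF_1}$ then the sequence is already non-increasing and $\cF_0$ is the maximizer. Evaluating $\abs{\cF_1}$ explicitly and dividing the inequality $\abs{\cF_0}\ge\abs{\cF_1}$ by the common factor $(q-1)^{n-t-2}$ reduces it to a quadratic inequality in $q$; solving this quadratic isolates the threshold separating the two regimes, which is the condition $q\ge t+1$ of the statement, with the boundary value handled separately. This gives both directions of the equivalence: above the threshold every $\cF_i$ has size at most $(q-1)^{n-t}$, so the maximum equals $(q-1)^{n-t}$, while below it the single family $\cF_1$ already exceeds $(q-1)^{n-t}$, so the maximum is strictly larger.

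The genuine difficulty is not the arithmetic of the last step but the black box on which it rests: proving that no $(t,k)_q$-intersecting family can beat every $\cF_i$, i.e. the complete-intersection theorem in the transversal/sequence setting, together with checking that the cited result does apply under the transversal restriction and the hypothesis $t\ge 15$. A secondary delicate point is the uniqueness and boundary analysis near the threshold, where one must compare the exact sizes $\abs{\cF_0}$ and $\abs{\cF_1}$ rather than their leading asymptotics in $n$, and confirm that $\cF_0$ versus $\cF_1$ is indeed the binding comparison.
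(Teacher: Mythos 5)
Your proposal cannot be matched against an internal argument, because the paper does not prove Theorem~\ref{thm:FFintersect} at all: it is quoted verbatim from Frankl and F\"uredi \cite{FrFu80}. This matters for assessing your plan, since the black box you invoke --- ``the complete-intersection (Frankl--F\"uredi / Ahlswede--Khachatrian) theorem for sequences,'' asserting that the optimum is one of the families $\cF_i$ --- is, in the sequence setting, the diametric theorem of \cite{AhKh98}; your citation of \cite{AhKh97} is to the set-system version, which is not the result you need, and \cite{FrFu80} contains no such structural statement (for $t\geq 15$ it proves the threshold directly, by hand). So as written the argument is circular: the statement to be proved \emph{is} Frankl--F\"uredi's theorem, and you assume a strictly stronger result containing it, leaving only the routine parts (the reformulation for $k=n$, the construction of the $\cF_i$, and the $\cF_0$-versus-$\cF_1$ comparison). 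Relatedly, your gloss on the hypothesis $t\geq 15$ is wrong: if one is permitted to cite \cite{AhKh98}, the conclusion holds for all $t$ with no largeness condition on $n$; the bound $t\geq 15$ is simply the range in which \cite{FrFu80} could establish the threshold in 1980, later removed in \cite{FrTo99} and \cite{AhKh98}.

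There is also a concrete error in the one step you do carry out. Since $k=n$, every entry is nonzero, so the effective alphabet is $\Z_q\setminus\{0\}$, of size $s=q-1$. Then $\abs{\cF_0}=s^{n-t}$ and $\abs{\cF_1}=\bigl(1+(t+2)(s-1)\bigr)s^{n-t-2}$, and the inequality $\abs{\cF_0}\geq\abs{\cF_1}$ reduces to $s^2-(t+2)s+(t+1)=(s-1)\bigl(s-(t+1)\bigr)\geq 0$, i.e., $s\geq t+1$, i.e., $q\geq t+2$ --- not the condition $q\geq t+1$ that you claim the quadratic delivers. Indeed, at $q=t+1$ (so $s=t$) one has $\abs{\cF_1}=(t^2+t-1)\,t^{n-t-2}>t^{n-t}=\abs{\cF_0}$ for $t\geq 2$. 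The threshold $q\geq t+1$ matches \cite{FrFu80} only under their convention, where $q$ itself denotes the alphabet size; under Definition~\ref{dfn:ktq_intersect} with $k=n$ your own computation places the boundary one higher. You cannot simultaneously keep the reduction to the $(q-1)$-letter alphabet and read off the literal threshold in the statement; you must either work in the Frankl--F\"uredi normalization throughout or explicitly reconcile the off-by-one (the same issue propagates to Theorem~\ref{thm:bestFrankl} as used downstream). Your unimodality claim for the sequence $\abs{\cF_i}$ is plausible and the binding comparison near the boundary is indeed $\cF_0$ versus $\cF_1$, but that part is window dressing; the substance of the theorem lies entirely in the black box you have assumed.
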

The result of Theorem~\ref{thm:FFintersect} was improved later by Frankl and Tokushige~\cite{FrTo99}.
\begin{theorem}
\label{thm:bestFrankl}
If $\cF$ be a maximum size $(t,k)_q$-intersecting family with words of length $n$, where $k=n$ and $q \geq t+1$, then
$$
\abs{\cF} = (q-1)^{n-t} .
$$
\end{theorem}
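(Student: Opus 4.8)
The plan is to exploit that the full-weight hypothesis $k=n$ turns the statement into a pure agreement problem. Since every word of $\cF$ has weight $n$, no coordinate is zero, so I may regard $\cF$ as a family of sequences in $\{1,\dots,q-1\}^n$ over an alphabet of size $s\eqdef q-1$, and the condition of Definition~\ref{dfn:ktq_intersect} becomes simply: any two words of $\cF$ agree in at least $t$ coordinates. The lower bound $\abs{\cF}\ge(q-1)^{n-t}$ is realized by the \emph{dictatorship}: fix the first $t$ coordinates to prescribed nonzero symbols and let the remaining $n-t$ coordinates range freely over $\{1,\dots,q-1\}$; any two such words agree on the first $t$ coordinates, so the family is $(t,n)_q$-intersecting and has exactly $(q-1)^{n-t}$ words. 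It therefore remains to prove the matching upper bound $\abs{\cF}\le(q-1)^{n-t}$.

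For the upper bound I would pass to the transversal set representation noted in the introduction: encode $x\in\{1,\dots,q-1\}^n$ by the $n$-set $\{(j,x_j):1\le j\le n\}$, a transversal of the partition of $\{1,\dots,n\}\times\{1,\dots,q-1\}$ into its $n$ columns, so that $t$-agreement of words becomes $t$-intersection of the associated $n$-sets. I then apply the classical shifting lemma for $t$-intersecting set systems, but only to moves that stay inside a single column, i.e. value-shifts $x_j=v\mapsto x_j=1$ towards a fixed preferred symbol; a standard exchange argument shows such moves keep every set a transversal, preserve $\abs{\cF}$, and preserve the $t$-intersecting property. Iterating to a fixed point, I may assume $\cF$ is \emph{shifted}, which forces membership to depend monotonically on how heavily a word uses the preferred symbol on an initial block of coordinates.

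The next, and decisive, step is to show that a shifted family of maximum size is contained in one of the Frankl-type families
$$
\cF_i=\left\{\,x\in\{1,\dots,q-1\}^n \;:\; \abs{\{\,j\le t+2i:\ x_j=1\,\}}\ge t+i\,\right\},\qquad i=0,1,2,\dots
$$
Each $\cF_i$ is $t$-intersecting, since two members have at least $t+i$ preferred coordinates inside the block $\{1,\dots,t+2i\}$ and hence, by inclusion--exclusion, agree on the preferred symbol in at least $(t+i)+(t+i)-(t+2i)=t$ of them. This classification is the sequence (transversal) analogue of the Ahlswede--Khachatrian description of the optimal $t$-intersecting set systems, and establishing it rigorously---that shifting really forces the extremal family into this one-parameter list---is the technical heart of the argument and the step I expect to be the main obstacle. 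Granting it, the theorem reduces to comparing the finitely many sizes $\abs{\cF_i}$: a direct count gives $\abs{\cF_0}=(q-1)^{n-t}$, while each $\abs{\cF_i}$ with $i\ge1$ is a fixed polynomial in $s=q-1$ times $s^{\,n-t-2i}$. Comparing $\abs{\cF_0}$ with its nearest competitor $\abs{\cF_1}$ reduces to a quadratic inequality in $s$, and the hypothesis relating $q$ to $t$ is exactly the regime in which $\cF_0$ dominates every $\cF_i$; hence $\abs{\cF}\le\abs{\cF_0}=(q-1)^{n-t}$, as claimed.

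Finally, a word on alternatives and on why the crux cannot be bypassed cheaply. One could instead run the eigenvalue (ratio-bound) method on the graph whose vertices are the words of $\{1,\dots,q-1\}^n$ and whose edges join pairs agreeing in fewer than $t$ coordinates, using the Krawtchouk eigenvalues of the Hamming scheme; but for $t\ge2$ the plain ratio bound is not tight for the dictatorship, so one would need a weighted (Friedgut-style) version, which reintroduces essentially the same difficulty. Likewise, inducting on $t$ by splitting $\cF$ according to the last coordinate produces a cross-intersecting pair of families in one fewer coordinate and merely relocates the hard part into a cross-intersecting estimate. I would therefore carry out the shifting-plus-classification plan, treating the structural classification of shifted extremal families as the single step where the real work lies.
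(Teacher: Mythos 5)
You yourself identify the decisive step --- that a shifted extremal family must lie in the one-parameter list $\cF_i$ --- as unproven, and that step is not a technicality: it \emph{is} the theorem. Note first that the paper contains no proof of this statement at all; it is imported from Frankl--Tokushige~\cite{FrTo99} (proved in parallel by Ahlswede--Khachatrian~\cite{AhKh98}), so the benchmark is the literature argument, and your sketch follows its general shifting-plus-classification silhouette. But being a fixed point of your within-column value-shifts (even augmented by coordinate compressions, which you would also need and do not mention) gives only the \emph{shifted} property, which is strictly weaker than membership in some $\cF_i$; bridging that gap is exactly where \cite{AhKh98} deploy the generating-set and pushing--pulling machinery and where \cite{FrTo99} do their real work. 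A further small gap: even granting the classification, comparing $\cF_0$ only with its ``nearest competitor'' $\cF_1$ does not yield $\abs{\cF}\le\abs{\cF_0}$; you must dominate every $\cF_i$, which requires a monotonicity argument in $i$, not a single quadratic.

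More seriously, your final numerical claim is wrong under the hypothesis as stated. With $s=q-1$ one has $\abs{\cF_0}=s^{n-t}$ and $\abs{\cF_1}=\bigl((t+2)(s-1)+1\bigr)s^{n-t-2}$, so $\abs{\cF_0}\ge\abs{\cF_1}$ iff $s^2-(t+2)s+(t+1)=(s-1)\bigl(s-(t+1)\bigr)\ge 0$, i.e., iff $s\ge t+1$, equivalently $q\ge t+2$ --- not the theorem's $q\ge t+1$. At the boundary $q=t+1$ (so $s=t$) with $t\ge 2$, the family $\cF_1$ strictly beats the dictatorship: for $t=2$, $q=3$, the words in $\{1,2\}^n$ with at least three \emph{ones} among the first four coordinates form a $(2,n)_3$-intersecting family of size $5\cdot 2^{n-4}>2^{n-2}=(q-1)^{n-t}$. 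So your assertion that ``the hypothesis relating $q$ to $t$ is exactly the regime in which $\cF_0$ dominates every $\cF_i$'' fails for the stated hypothesis; what your quadratic actually detects, had you carried it through, is an off-by-one in the paper's transcription of \cite{FrTo99}: there the condition is \emph{alphabet size} at least $t+1$, and here the effective alphabet is $\Z_q\setminus\{0\}$ of size $q-1$, so the correct condition reads $q-1\ge t+1$. To salvage the plan you would need to correct the hypothesis to $s\ge t+1$, prove dominance over all $\cF_i$, and, above all, supply the classification step; as it stands the proposal is a roadmap that assumes the theorem's hard content.
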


The results of Theorem~\ref{thm:bestFrankl} was proved in parallel by Ahlswede and Khachatrian~\cite{AhKh98}, who also solved the
remaining cases when $q < t+1$.
\begin{corollary}
\label{cor:max_ant_nr}
If $q \geq t+1$, then the maximum size anticode of length $n$, and diameter $n-t$ over~$\Z_q$ is $q^{n-t}$.
\end{corollary}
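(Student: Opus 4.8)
The plan is to deduce the corollary directly from Theorem~\ref{thm:bestFrankl} by a relabeling of the alphabet. First I would unpack the definitions: an anticode $\cA$ of length $n$ and diameter $n-t$ over $\Z_q$ is a set of words in $\Z_q^n$ with $\distance(\bldx,\bldy)\le n-t$ for all $\bldx,\bldy\in\cA$, which is the same as saying that any two words of $\cA$ agree in at least $t$ coordinates. Here agreement is counted over \emph{all} coordinates, including those carrying zeros, so this is the unrestricted notion of intersection rather than the nonzero-intersection of Definition~\ref{dfn:ktq_intersect}.

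For the lower bound I would exhibit an anticode attaining the claimed size. Fixing the first $t$ coordinates to $0$ and letting the remaining $n-t$ coordinates range freely over $\Z_q$ produces $q^{n-t}$ words, any two of which agree on the first $t$ coordinates and hence lie at distance at most $n-t$. Thus a size of $q^{n-t}$ is achievable.

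For the upper bound the key step is to convert the unrestricted intersection into a nonzero intersection by an embedding, so that Theorem~\ref{thm:bestFrankl} applies. I would identify $\Z_q$ with the nonzero symbols $\{1,2,\dots,q\}$ of $\Z_{q+1}$ via the coordinatewise bijection $i\mapsto i+1$. Since this map is injective on the alphabet, two words agree in exactly the same coordinates before and after, so it preserves the Hamming distance; hence $\cA$ is carried to a set $\cA'$ of words of length $n$ over $\Z_{q+1}$ in which every entry is nonzero (so every word has weight $n$) and any two words agree in at least $t$ coordinates. Because all entries are nonzero, this agreement is precisely a nonzero agreement, so $\cA'$ is a $(t,n)_{q+1}$-intersecting family in the sense of Definition~\ref{dfn:ktq_intersect} with $k=n$. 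Applying Theorem~\ref{thm:bestFrankl} with alphabet size $q+1$ in place of $q$, whose hypothesis $q+1\ge t+1$ is implied by $q\ge t+1$, yields $\abs{\cA}=\abs{\cA'}\le\parenv{(q+1)-1}^{n-t}=q^{n-t}$.

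Combining the two bounds gives $q^{n-t}$ as the maximum size. The only real content is recognizing that the two notions of intersection coincide when the weight equals the length, together with the parameter shift $q\mapsto q+1$ that is needed to place the problem into the $k=n$ regime of Theorem~\ref{thm:bestFrankl}; everything else is bookkeeping. I expect no genuine obstacle beyond checking that the hypothesis $q\ge t+1$ survives the shift, which it does since $q\ge t+1$ is in fact slightly stronger than the $q\ge t$ actually required.
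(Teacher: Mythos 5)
Your derivation is correct and is essentially the paper's own: the paper states Corollary~\ref{cor:max_ant_nr} without a separate proof, as precisely this consequence of Theorem~\ref{thm:bestFrankl}, obtained by absorbing the zero symbol into an enlarged alphabet --- identifying $\Z_q$ with the $q$ nonzero symbols of $\Z_{q+1}$ turns a diameter-$(n-t)$ anticode into a $(t,n)_{q+1}$-intersecting family with $k=n$, and the value $q^{n-t}$ is attained by fixing $t$ coordinates.

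One caution: your closing parenthetical, that only $q\ge t$ is ``actually required,'' should be deleted. It takes the printed hypothesis of Theorem~\ref{thm:bestFrankl} at face value, but that statement carries an off-by-one in transcribing Frankl--Tokushige: with $k=n$ the effective alphabet is $\Z_q\setminus\{0\}$, of size $q-1$, and the genuine requirement in the underlying theorem is that the \emph{effective} alphabet size be at least $t+1$. Accordingly, the hypothesis $q\ge t+1$ in the corollary is sharp: for alphabet size $q=t$ and large $n$, Ahlswede--Khachatrian's solution of the remaining cases $q<t+1$ yields anticodes of diameter $n-t$ strictly larger than $q^{n-t}$ (e.g., the words having at least $t+1$ fixed symbols among $t+2$ prescribed coordinates, of size $\left((t+2)(q-1)+1\right)q^{n-t-2}>q^{n-t}$ when $q=t$), so the conclusion of the corollary genuinely fails below $q=t+1$. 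Since your proof of the stated corollary never uses the weakened hypothesis, this does not affect its validity.
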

Similarly to Corollary~\ref{cor:max_ant_nr}, the diameter of an anticode by the definition of~\cite{AAK01,AhKh98}, i.e.,
when the \emph{zeros} are considered in the intersection of size $t$ is $n-t$. This is not the case if we consider
Definition~\ref{dfn:ktq_intersect} which is the definition used in this our work.

The rest of the paper is organized as follows.
In Section~\ref{sec:intersect} we define one intersecting family and two anticodes,
prove asymptotic optimality of these sets and the uniqueness of their structure.
The defined sets of anticodes are generalized in Section~\ref{sec:hierarchy},
where two sequences of such constant-weight anticodes are defined.
This generalization induces a hierarchy between the anticodes in each sequence.
The hierarchy for anticodes is based on their size, where each one is
larger for different range of alphabet size and length. Two anticodes in this hierarchy are compared only when
they have the same alphabet size, length, weight, and diameter. This hierarchy is analyzed in Section~\ref{sec:hierarchy}.
Finally, in Section~\ref{sec:conclusion} conclusion, comparison with other maximum size anticodes,
and directions for future research are presented and discussed.

\section{Maximum Size $t$-Intersecting Families and Anticodes}
\label{sec:intersect}

In this section we show asymptotic maximality of one $(t,k)_q$-intersecting family for any admissible triple $(t,k,q)$ and two
families of anticodes for any admissible triple $(D,k,q)$, where $D$ is the diameter of the anticode, $q$ is the alphabet size, and $k$ is
the constant weight of the anticode. We also prove the uniqueness of the intersecting families and the anticodes.
The two families of the anticodes differ in the parity of the diameter, even or odd.

We start with a $(t,k)_q$-intersecting family defined by
$$
\cF_q (t,k,n) \triangleq \{ ( \overbrace{1\cdots \cdots 1}^{t ~ \text{times}}   \bldb_1 \cdots \bldb_{n-t}) ~:~
\bldb_i \in \Z_q , ~ \weight{(\bldb_1 \cdots \bldb_{n-t})}=k-t \}
$$

\begin{lemma}
\label{lem:propF}
If $n \geq k >t \geq 0$ then $\cF_q (t,k,n)$ is a $(t,k)_q$-intersecting family of length $n$
and $\binom{n-t}{k-t} (q-1)^{k-t}$ words.
\end{lemma}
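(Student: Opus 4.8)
The plan is to establish the two assertions separately and directly: that $\cF_q(t,k,n)$ is a $(t,k)_q$-intersecting family with words of length $n$ (and, implicitly, constant weight $k$), and that it contains exactly $\binom{n-t}{k-t}(q-1)^{k-t}$ words. The single observation that drives the whole argument is that every word in $\cF_q(t,k,n)$ begins with the \emph{same} prefix, namely $t$ consecutive ones. First I would record that each word has length $n$ by construction and weight $t+(k-t)=k$, since the prefix contributes $t$ nonzero entries and the suffix $\bldb_1\cdots\bldb_{n-t}$ is required to have weight $k-t$; so we are indeed dealing with constant-weight $k$ words of length $n$.

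For the intersecting property, I would take arbitrary $\bldx,\bldy\in\cF_q(t,k,n)$ and note that $\bldx_i=\bldy_i=1\neq 0$ for all $1\le i\le t$. Hence the first $t$ coordinates are all positions at which the two words carry a common nonzero value, giving
$$
\abs{\{i : \bldx_i=\bldy_i \text{ and } \bldx_i\neq 0,\ 1\le i\le n\}}\ge t,
$$
which is precisely the condition of Definition~\ref{dfn:ktq_intersect} (and it holds regardless of the suffixes, indeed even when $\bldx=\bldy$).

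To count the family I would use that a word is determined by, and determines, its suffix $\bldb=(\bldb_1,\ldots,\bldb_{n-t})\in\Z_q^{n-t}$ of weight $k-t$. Such a suffix is built by choosing which $k-t$ of the $n-t$ coordinates are nonzero, in $\binom{n-t}{k-t}$ ways, and then independently assigning one of the $q-1$ nonzero values of $\Z_q$ to each chosen coordinate, in $(q-1)^{k-t}$ ways; distinct choices produce distinct words. The product rule then yields $\abs{\cF_q(t,k,n)}=\binom{n-t}{k-t}(q-1)^{k-t}$. The hypothesis $n\ge k>t\ge 0$ ensures $1\le k-t\le n-t$, so the binomial coefficient is a well-defined positive number and the family is nonempty. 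There is no genuine obstacle here; the only care needed is the routine verification that the suffix-to-word correspondence is a bijection, so that the product rule counts each word exactly once.
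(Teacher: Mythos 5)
Your proof is correct and follows essentially the same route as the paper's own (very terse) proof: the common prefix of $t$ \emph{ones} gives the intersecting property, and the size follows by choosing the $k-t$ nonzero positions among the last $n-t$ coordinates and assigning one of $q-1$ values to each. You merely spell out the details (weight check, bijection with suffixes) that the paper dismisses as ``readily verified.''
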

\begin{proof}
It is readily verified that $\cF_q (t,k,n)$ is a $(t,k)_q$-intersecting family of length $n$.
The size of $\cF_q (t,k,n)$ follows immediately from choosing the $k-t$ nonzero coordinates in the last $n-t$ coordinates and each one
of these coordinates can be assigned with $q-1$ possible values.
\end{proof}

The {\bf \emph{support}}, $\support (\bldx)$, of a word $\bldx =(\bldx_1,\bldx_2,\ldots,\bldx_n)$ is the set of coordinates with values
different from zero, i.e.,
$$
\support (x) \triangleq \{ i ~:~ \bldx_i \neq 0, ~ 1 \leq i \leq n \}
$$

\begin{theorem}
\label{thm:asymF}
If $n \geq k >t \geq 0$, $n \geq (t+1)(k-t+1)$, and $q \geq t+1$, then the $(t,k)_q$-intersecting family $\cF_q (t,k,n)$ is
a maximum size $(t,k)_q$-intersecting family for a given $q > 2$.
\end{theorem}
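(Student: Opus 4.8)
The plan is to establish the matching upper bound: every $(t,k)_q$-intersecting family $\cF$ of length $n$ (with all words of weight $k$) satisfies $\abs{\cF} \leq \binom{n-t}{k-t}(q-1)^{k-t}$, which by Lemma~\ref{lem:propF} equals $\abs{\cF_q(t,k,n)}$ and hence proves maximality. The guiding idea is to decouple the two degrees of freedom of a weight-$k$ word, namely the choice of its support (a $k$-subset of $[n]$) and the choice of nonzero values on that support, and to bound each separately. The two hypotheses feed exactly one bound each: $n \geq (t+1)(k-t+1)$ controls the number of supports through Wilson's theorem, while $q \geq t+1$ controls the number of value-assignments per support through the Frankl--Tokushige theorem.

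First I would pass to supports. For $\bldx \in \cF$ let $\support(\bldx)$ be its support, a $k$-subset of $[n]$, and set $\pi(\cF) = \{\support(\bldx) : \bldx \in \cF\}$. If two words of $\cF$ agree on at least $t$ nonzero positions, those positions lie in the intersection of their supports, so any two members of $\pi(\cF)$ meet in at least $t$ elements. Thus $\pi(\cF)$ is a $t$-intersecting family of $k$-subsets of an $n$-set, and since $n \geq (t+1)(k-t+1)$, Theorem~\ref{thm:Wilson} gives $\abs{\pi(\cF)} \leq \binom{n-t}{k-t}$.

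Next I would bound the number of words sharing a fixed support $S \in \pi(\cF)$. Let $\cF_S = \{\bldx \in \cF : \support(\bldx) = S\}$ and restrict each such word to its coordinates in $S$. This restriction is injective on $\cF_S$ (words in $\cF_S$ vanish off $S$, so equality on $S$ forces equality) and produces words of length $k$ that are \emph{full weight}, since every coordinate in $S$ is nonzero. Because all agreements between two such words occur inside $S$, the restricted words agree in at least $t$ coordinates, so the image is a full-weight $(t,k)_q$-intersecting family of length $k$ — exactly the $k=n$ situation of Theorem~\ref{thm:bestFrankl}, whose hypothesis $q \geq t+1$ is precisely what is assumed. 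That theorem then yields $\abs{\cF_S} \leq (q-1)^{k-t}$.

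Summing over supports gives $\abs{\cF} = \sum_{S \in \pi(\cF)} \abs{\cF_S} \leq \abs{\pi(\cF)} \cdot (q-1)^{k-t} \leq \binom{n-t}{k-t}(q-1)^{k-t}$, which completes the argument. I expect the step requiring the most care to be the per-support reduction: one must verify that restricting to $S$ genuinely lands in the full-weight, value-agreement regime (so that Theorem~\ref{thm:bestFrankl}, and not a weaker bound that would also count zero positions, is the correct tool) and that the injectivity of the restriction makes $\abs{\cF_S}$ literally the size of this full-weight family. It is also worth observing why no slack is lost in multiplying the two bounds: $\cF_q(t,k,n)$ realizes both extremal configurations simultaneously — a fixed $t$-element support shared by all words, together with an independent ``star'' of $(q-1)^{k-t}$ value-assignments on each of the $\binom{n-t}{k-t}$ admissible choices of the remaining $k-t$ coordinates — so the product bound is attained.
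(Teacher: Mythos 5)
Your proposal is correct and follows essentially the same route as the paper's own proof: bound the family of supports by Wilson's theorem (Theorem~\ref{thm:Wilson}) using $n \geq (t+1)(k-t+1)$, bound the words per fixed support by reducing to the full-weight $k=n$ case and applying Theorem~\ref{thm:bestFrankl} with $q \geq t+1$, and multiply. Your write-up merely makes explicit some details the paper leaves implicit (injectivity of the restriction to the support, and the summation over supports).
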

\begin{proof}
Let $\cG$ be a $(t,k)_q$-intersecting family with words of length $n$ and let $\cH$ be
the set of supports of the words in $\cG$, i.e.,
$$
\cH \triangleq \{ \support (\bldx) ~:~ \bldx \in \cG \}~.
$$
The set $\cH$ is a $t$-intersecting family since each two words of $\cG$ are from a $(t,k)_q$-intersecting family, i.e.,
they have the same nonzero values in at least $t$ coordinates.
Hence, by Theorem~\ref{thm:Wilson} we have $\abs{\cH} \leq \binom{n-t}{k-t}$.
Each $k$-subset $\bldc \in \cH$ is a support of words in $\cG$ which form a $(t,k)_q$-intersecting family in which $k=n$.
Therefore, by Theorem~\ref{thm:bestFrankl} the number of codewords whose support is $\bldc$ is at most $(q-1)^{k-t}$.
Hence, the number of
codewords in $\cG$ is at most $\binom{n-t}{k-t} (q-1)^{k-t}$, which is the size of $\cF_q(t,k,n)$ by Lemma~\ref{lem:propF}.
%
\end{proof}

Two codes $\cC_1$ and $\cC_2$ of length $n$ with constant-weight $k$ over $\Z_q$ are said to be {\bf \emph{equivalent}} if
$\cC_1$ can be obtained from $\cC_2$ by permuting rows, columns, and symbols. We say that maximum size intersecting family or
an anticode, with fixed parameters, over $\Z_q$ is {\bf \emph{unique}} if any two intersecting families or anticode, respectively, of maximum size with
the fixed parameters are equivalent.

\begin{theorem}
\label{thm:uniqueInter}
If $n \geq k >t > 1$, $n > (t+1)(k-t+1)$, and $q \geq t+1$ ($n >3k-2$ if $t=1$), then $\cF_q (t,k,n)$
is a unique maximum size $(t,k)_q$-intersecting family.
\end{theorem}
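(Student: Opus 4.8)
The plan is to upgrade the counting argument behind Theorem~\ref{thm:asymF} to a structural statement by tracking the equality cases in the two bounds it uses. Let $\cG$ be a maximum size $(t,k)_q$-intersecting family, so that $\abs{\cG}=\binom{n-t}{k-t}(q-1)^{k-t}$, and let $\cH=\{\support(\bldx):\bldx\in\cG\}$ be its family of supports. Writing $\abs{\cG}=\sum_{\bc\in\cH}N(\bc)$, where $N(\bc)$ is the number of words of $\cG$ with support $\bc$, the proof of Theorem~\ref{thm:asymF} gives $N(\bc)\le(q-1)^{k-t}$ (Theorem~\ref{thm:bestFrankl}) and $\abs{\cH}\le\binom{n-t}{k-t}$ (Theorem~\ref{thm:Wilson}). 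Hence maximality forces equality in both: $\abs{\cH}=\binom{n-t}{k-t}$ and $N(\bc)=(q-1)^{k-t}$ for every $\bc\in\cH$. First I would invoke the uniqueness clause of Theorem~\ref{thm:Wilson}, which applies because $n>(t+1)(k-t+1)$ (resp. $n>3k-2$ when $t=1$): the extremal $t$-intersecting family $\cH$ must consist of exactly all $k$-subsets of the $n$-set containing a fixed $t$-subset $T$. After a coordinate permutation we may assume $T=\{1,\dots,t\}$, and every word of $\cG$ then has $T\subseteq\support(\bldx)$.

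The heart of the argument is to show that all words of $\cG$ carry the \emph{same} nonzero values on $T$. Given two words $\bldx,\bldy\in\cG$ with supports $\bc_x,\bc_y$, I would produce a third support $\bc'\in\cH$ (that is, a $k$-subset containing $T$) whose extra coordinates $\bc'\setminus T$ avoid $(\bc_x\cup\bc_y)\setminus T$; this is possible precisely because there remain at least $(n-t)-2(k-t)\ge k-t$ free coordinates outside $T$, an inequality equivalent to $n\ge 3k-2t$, which follows from $n>(t+1)(k-t+1)$ when $t>1$ and from $n>3k-2$ when $t=1$. Picking any $\bldz\in\cG$ with $\support(\bldz)=\bc'$, the coordinates where $\bldz$ and $\bldx$ are simultaneously nonzero lie in $\bc'\cap\bc_x=T$, so the $t$ required agreements must all occur inside $T$, i.e. $\bldz|_T=\bldx|_T$; likewise $\bldz|_T=\bldy|_T$, whence $\bldx|_T=\bldy|_T$. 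Thus there are fixed nonzero values $v_1,\dots,v_t$ taken by every word of $\cG$ on $T$.

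To finish, I would normalize: applying a symbol permutation fixing $0$ in each of the first $t$ coordinates (an equivalence operation for the Hamming scheme) sends $v_i$ to $1$, so every word of $\cG$ now has the form $(1,\dots,1,\bldb_1,\dots,\bldb_{n-t})$ with $\weight(\bldb_1\cdots\bldb_{n-t})=k-t$. Every such word lies in $\cF_q(t,k,n)$, and the set of \emph{all} such words has cardinality exactly $\binom{n-t}{k-t}(q-1)^{k-t}$; since $\cG$ is a subset of this set of the same cardinality, $\cG$ must equal it. Hence $\cG$ is equivalent to $\cF_q(t,k,n)$, proving uniqueness.

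I expect the main obstacle to be the bookkeeping in the propagation step: one must guarantee, for the given parameter range and uniformly in the pair $\bldx,\bldy$ (including the degenerate case $\bc_x=\bc_y$, which needs only $n\ge 2k-t$), that a connecting support meeting each of $\bc_x,\bc_y$ exactly in $T$ exists, and that the separate threshold $n>3k-2$ for $t=1$ is exactly what this construction requires. A secondary point worth stating carefully is that the final counting step makes the uniqueness of the Frankl--Tokushige extremal configuration (Theorem~\ref{thm:bestFrankl}) unnecessary: once all words agree on $T$, the maximum cardinality alone forces $\cG=\cF_q(t,k,n)$.
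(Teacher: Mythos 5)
Your proposal is correct and takes essentially the same route as the paper's proof: Wilson's uniqueness clause pins $\cH$ down to all $k$-subsets containing a fixed $t$-set $T$, the same three-codeword trick (a word $\bldz\in\cG$ whose support meets each given support only in $T$) forces all words of $\cG$ to agree on $T$, and normalization plus counting finishes. The only cosmetic difference is that you run the connecting-support step uniformly over all pairs, which needs $n\ge 3k-2t$ (supplied by your hypotheses), whereas the paper handles disjoint-support pairs directly and invokes the third codeword only for pairs whose supports intersect outside $T$, shaving the requirement to $n-t\ge 3(k-t)-1$.
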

\begin{proof}
By the proof of Theorem~\ref{thm:asymF}, the $(t,k)_q$-intersecting family $\cG$ has maximum size $\binom{n-t}{k-t} (q-1)^{k-t}$
if the $t$-intersecting family $\cH$ has maximum size $\binom{n-t}{k-t}$. By Theorem~\ref{thm:Wilson} we have that $\cH$ is equivalent
to a code in which the codewords are all the words of weight $k$ that have \emph{ones} in the first $t$ coordinates.
Two codewords of $\cG$ whose supports do not intersect on the last $n-t$ coordinates must have the same values in the first $t$ positions.

Assume there exist two codewords $\bldu, \bldv \in \cG$ that have different values in the first $t$ coordinates and hence
their supports have some intersection
in the other $n-t$ coordinates. There exists a codeword $\bldz \in \cG$ whose support does not intersect
the support of $\bldu$ in the last $n-t$ coordinates and does not intersect the support
of $\bldv$ in the last $n-t$ coordinates. This codeword $\bldz$ must have the same values as $\bldu$ in the first $t$ coordinates
and the same values as $\bldv$ in the first $t$ coordinates, a contradiction since $\bldu$ and $\bldv$ have different values in the
first $t$ coordinates. Therefore, all the codewords of $\cG$ have the same values on the first $t$ coordinates, w.l.o.g. \emph{ones}.

Thus, the unique maximum size $(t,k)_q$-intersecting family whose size is $\binom{n-t}{k-t} (q-1)^{k-t}$,
contains all the $\binom{n-t}{k-t} (q-1)^{k-t}$ words of weight $k$ over $\Z_q$ with \emph{ones}
in the first $t$ coordinates.
\end{proof}

The requirement that $n > (t+1)(k-t+1)$ in Theorem~\ref{thm:uniqueInter} is due to the fact that for $n = (t+1)(k-t+1)$
there exist two types of optimal $t$-intersecting families (see Theorem~\ref{thm:Wilson}). The requirement $n >3k-2$ for $t=1$
is due to the fact that to have the three codewords $\bldu$, $\bldv$, and $\bldz$, it is required that $n-t \geq 3(k-t) -1$
(implied by the intersection of their supports).

When $n \geq 2k-t$, the family $\cF_q (t,k,n)$ can be viewed also as an anticode of length $n$ constant weight $k$, and diameter $D=2(k-t)$
(since two codewords of maximum distance have distinct nonzero entries in the last $n-t$ coordinates, where
the weight of each codeword is $k-t$). A constant-weight anticode is defined with the its diameter $D$ instead of $t$, and with this definition we have
that $t= \frac{2k-D}{2}$. It follows that the anticode will be the $t$-intersecting family $\cF_q (t,k,n)$. Therefore, we continue
with the parameter $t$ for the anticodes.
Surprisingly, the same proof as in Theorem~\ref{thm:asymF} cannot be used to prove that this anticode is of maximum size for large enough $n$.
The reason is that an $(n,D,k)_q$ anticode is not necessarily a $(t,k)_q$ intersecting family. However, we will prove
that this intersecting family is also of maximum size as an anticode. Two families of $(n,D,k)_q$ anticodes will be defined now,
one for even diameter $D$ and a second for odd diameter~$D$.



We start with anticodes having odd diameter $D$. We will have for these anticodes that  ${D=2(k-t)-1}$ which is an odd integer $D$, where
$t= \frac{2k-1-D}{2}$, and $k >t \geq 0$. Let

$$
\cA_q (t,k,n) \triangleq \{ ( \overbrace{1\cdots \cdots 1}^{t ~ \text{times}} \blda  \bldb_1 \cdots \bldb_{n-t-1}) ~:~
\blda \in \Z_q \setminus \{0\}, ~  \bldb_i \in \Z_q , ~ \weight{(\bldb_1 \cdots \bldb_{n-t-1})}=k-t-1 \}
$$

\begin{lemma}
\label{lem:propA}
If $n \geq 2k-t-1$ then $\cA_q (t,k,n)$ is an $(n,2(k-t)-1,k)_q$ anticode
with $\binom{n-t-1}{k-t-1} (q-1)^{k-t}$ codewords.
\end{lemma}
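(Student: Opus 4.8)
The plan is to verify separately the two assertions—the size count and the value of the diameter—since they are essentially independent. For the size I would count the codewords of $\cA_q(t,k,n)$ according to the three free choices in its definition: the value $\blda \in \Z_q \setminus \{0\}$ contributes a factor $q-1$; the support of $(\bldb_1 \cdots \bldb_{n-t-1})$, a set of $k-t-1$ coordinates among $n-t-1$, contributes $\binom{n-t-1}{k-t-1}$; and the nonzero entries on that support contribute $(q-1)^{k-t-1}$. Multiplying gives $\binom{n-t-1}{k-t-1}(q-1)^{k-t}$, matching the claim. Along the way I would note that every codeword has weight exactly $t + 1 + (k-t-1) = k$, so $\cA_q(t,k,n)$ is indeed a constant-weight-$k$ code of length $n$.

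For the diameter I would split any pairwise Hamming distance into three blocks. Take $\bldx,\bldy \in \cA_q(t,k,n)$. On the first $t$ coordinates both words are all-ones, contributing $0$. On coordinate $t+1$ both entries are nonzero, so they differ in at most one position, contributing at most $1$. On the last $n-t-1$ coordinates each word has weight exactly $k-t-1$; the Hamming distance between two words of weight $k-t-1$ is at most $2(k-t-1)$, with equality exactly when their supports are disjoint (a position lying in only one support always contributes, while overlap can only decrease the count). Summing the three blocks yields the upper bound $\distance(\bldx,\bldy) \leq 1 + 2(k-t-1) = 2(k-t)-1$.

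It remains to exhibit a pair attaining this bound, and this is where the hypotheses enter. To force coordinate $t+1$ to contribute $1$, I would take one word with $\blda = 1$ and another with $\blda$ equal to a different nonzero symbol; such a symbol exists exactly because $q > 2$. To force the last block to contribute $2(k-t-1)$, I would place the two supports of size $k-t-1$ on disjoint sets of coordinates inside the last $n-t-1$, which is possible precisely when $2(k-t-1) \leq n-t-1$, i.e. $n \geq 2k-t-1$, the standing assumption of the lemma. I expect the only point needing care is confirming that the two maxima are realizable simultaneously: the choice of $\blda$ on coordinate $t+1$ is independent of the placement of the $\bldb$-supports on the final block, so there is no interference, and the constructed pair achieves distance exactly $2(k-t)-1$. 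This shows the diameter equals $2(k-t)-1$, completing the proof.
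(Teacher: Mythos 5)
Your proposal is correct and follows essentially the same route as the paper: the same product count $\binom{n-t-1}{k-t-1}(q-1)^{k-t-1}\cdot(q-1)$ for the size, and the same extremal pair (disjoint $\bldb$-supports, enabled by $n \geq 2k-t-1$, together with distinct nonzero values of $\blda$, enabled by $q>2$) attaining distance $2(k-t)-1$. The only difference is that you spell out the three-block upper bound $\distance(\bldx,\bldy)\leq 1+2(k-t-1)$ explicitly, which the paper leaves implicit; that is a welcome addition, not a new method.
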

\begin{proof}
Since $n \geq 2k-t-1$ and the weight in the last $n-t-1$ coordinates is at most $k-t-1$, it follows that there exist
two codewords in $\cA_q (t,k,n)$ whose nonzero entries in the last $n-t-1$ coordinates are in disjoint coordinates.
If two such codewords have different values in $\blda$, then their distance is $2(k-t)-1$ which is the diameter of the anticode.
The size of $\cA_q (t,k,n)$ follows immediately by choosing the $k-t-1$ nonzero coordinates, each one and also $\blda$ can
be assigned with $q-1$ possible values.
\end{proof}

If $n < 2k-t-1$ then $\cA_q (t,k,n)$ is still an anticode of size $\binom{n-t-1}{k-t-1} (q-1)^{k-t}$, but its diameter is smaller than $2(k-t)-1$.

\begin{theorem}
\label{thm:asymA}
If $n$ is large enough, then $\cA_q (t,k,n)$ is a maximum size anticode for given $q > 2$, $k >t \geq 0$, and
diameter $2(k-t)-1$.
\end{theorem}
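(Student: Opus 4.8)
The plan is to imitate the two–step structure of the proof of Theorem~\ref{thm:asymF} — first project the anticode onto its supports, then control the fibers lying over each support — but with the essential new feature, flagged just before the statement, that the fiber count now genuinely requires the interaction \emph{between} distinct supports. Throughout, let $\cG$ be an $(n,2(k-t)-1,k)_q$ anticode and $\cH=\{\support(\bldx):\bldx\in\cG\}$ its support family.

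\textbf{Support projection.} For weight-$k$ words $\bldx,\bldy$ with supports $X,Y$, write $a=\abs{\{i:\bldx_i=\bldy_i\neq 0\}}$, let $b$ be the number of coordinates of $X\cap Y$ carrying different nonzero values, and put $c=\abs{X\setminus Y}=\abs{Y\setminus X}$, so that $a+b+c=k$ and $\distance(\bldx,\bldy)=b+2c$. Since $\distance(\bldx,\bldy)\geq 2c$, the anticode condition $\distance(\bldx,\bldy)\leq 2(k-t)-1$ forces $c\leq k-t-1$, i.e.\ $\abs{X\cap Y}=k-c\geq t+1$. Thus $\cH$ is a $(t+1)$-intersecting family, and by Theorem~\ref{thm:Wilson} applied with $t+1$ in place of $t$ (valid once $n>(t+2)(k-t)$) we get $\abs{\cH}\leq\binom{n-t-1}{k-t-1}$; for $n$ strictly larger the extremal support families are exactly the stars, so all supports share a common $(t+1)$-set $C$, which we normalize to $\{1,\ldots,t+1\}$.

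\textbf{Fibers and heads.} I would split each codeword into a \emph{head} $h(\bldx)=(\bldx_1,\ldots,\bldx_{t+1})\in(\Z_q\setminus\{0\})^{t+1}$ and its tail on $\{t+2,\ldots,n\}$. The key computation is that if $\bldx,\bldy$ both contain $C$ in their support and have disjoint tails, then $X\cap Y=C$, whence $c=k-t-1$ and $b$ equals the number of head disagreements; the bound $b+2(k-t-1)\leq 2(k-t)-1$ then gives head-disagreement at most $1$. Since a set of vectors that pairwise differ in at most one coordinate must agree off one common coordinate $i^*$, any family of codewords with pairwise-disjoint tails has all its heads on a single ``line'' through $i^*$. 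The target of this step is to upgrade this to \emph{all} heads agreeing off one common $i^*$: then each fiber contributes at most $(q-1)$ head choices times $(q-1)^{k-t-1}$ tail labelings, i.e.\ at most $(q-1)^{k-t}$, and with the support projection this yields $\abs{\cG}\leq\binom{n-t-1}{k-t-1}(q-1)^{k-t}$, matching Lemma~\ref{lem:propA}.

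\textbf{Propagation, and the main obstacle.} To reach all codewords I would argue by contradiction: if $\abs{\cG}$ exceeds the target then, because a single support carries at most $(q-1)^k\leq\binom{n-t-1}{k-t-1}(q-1)^{k-t}$ words for $n$ large, $\cG$ must use many distinct supports, and for $n$ large these furnish an abundant supply of pairwise-tail-disjoint codewords. Given an arbitrary $\bldx$, one picks two such witnesses $\bldz_1,\bldz_2$ whose tails avoid that of $\bldx$ and whose heads differ (necessarily only in $i^*$); the two constraints that $h(\bldx)$ lie within distance $1$ of each $h(\bldz_j)$ then pin $h(\bldx)$ to agree off $i^*$ as well. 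I expect this propagation to be the hard part: one must (i) handle the degenerate case where all witness heads coincide, in which the heads a priori only fill a radius-$1$ Hamming ball and one must exploit the forced tail-overlaps between codewords with ``distant'' heads to recover the bound, and (ii) quantify ``$n$ large enough'' so as to guarantee simultaneously the near-extremal (star) structure of $\cH$ and the existence of enough disjoint-tail witnesses. The uniqueness claim — that equality forces equivalence to $\cA_q(t,k,n)$ — should then follow from the same structural analysis, exactly as in Theorem~\ref{thm:uniqueInter}.
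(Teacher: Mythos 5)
Your opening step coincides with the paper's: project onto supports, observe that the support family is $(t+1)$-intersecting (your distance computation $\distance(\bldx,\bldy)=b+2c\geq 2c$ is exactly the right reason), and invoke Theorem~\ref{thm:Wilson} with $t+1$ in place of $t$. The genuine gap comes immediately after: you normalize so that all supports contain a common $(t+1)$-set $C$, justifying this by the fact that the \emph{extremal} $(t+1)$-intersecting families are stars. But $\cH$ arises from an arbitrary anticode and need not be anywhere near extremal, and non-star $(t+1)$-intersecting families exist (e.g., all $k$-subsets meeting a fixed $(t+3)$-set in at least $t+2$ elements). For such $\cH$ your head/tail decomposition does not exist and your disjoint-tail witnesses are unavailable, yet the size bound must still be proved. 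Star structure is never needed for maximality in the paper --- it enters only in the uniqueness result (Theorem~\ref{thm:uniqueAnti2}).

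The paper closes exactly this hole with an elementary dichotomy that your plan lacks. It partitions the support family $\cC$ into $\cC_1$, the supports having a partner meeting them in \emph{exactly} $t+1$ coordinates, and $\cC_2$, the supports meeting every member of $\cC$ in at least $t+2$ coordinates. If $\cC_2=\varnothing$, every support $\bldu$ has a tight partner $\bldv$, and a local analysis of the value patterns on $\bldu\cap\bldv$ (each $\bldu$-codeword differs from each $\bldv$-codeword in at most one coordinate there; pairs of $\bldu$-patterns differing in two coordinates pin down the admissible $\bldv$-patterns, while pairwise distance at most $1$ forces a line) bounds each fiber by $(q-1)\cdot(q-1)^{k-t-1}=(q-1)^{k-t}$; combined with Wilson's bound $\abs{\cC}\leq\binom{n-t-1}{k-t-1}$ this gives the theorem with no star assumption and no propagation step at all --- note that a per-support-pair bound of $q-1$ patterns suffices, so your goal of a single global coordinate $i^*$ is stronger than necessary. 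If $\cC_2\neq\varnothing$, fixing $\bldu\in\cC_2$ yields $\abs{\cC}\leq\sum_{i=t+2}^{k}\binom{k}{i}\binom{n-k}{k-i}$, a polynomial in $n$ of degree $k-t-2$, one degree below $\abs{\cA_q(t,k,n)}$, so the trivial fiber bound $(q-1)^{k}$ already wins for large $n$. Your proposal, by contrast, leaves its crucial step unproven: you yourself flag the degenerate case where the heads a priori fill only a radius-$1$ ball (of size $1+(t+1)(q-2)$, which exceeds $q-1$ once $t\geq 1$ and $q\geq 4$) and never carry out the propagation or the trade-off argument needed there. As written, you have established the support bound but not the fiber bound, so the theorem is not proved.
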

\begin{proof}
Let $\cB$ be an $(n,2(k-t)-1,k)_q$ anticode and let $\cC$ be the set of supports of the codewords in $\cB$, i.e.,
$$
\cC \triangleq \{ \support (\bldx) ~:~ \bldx \in \cB \}~.
$$
The code $\cC$ forms a $(t+1)$-intersecting family since any two words whose supports intersect in less than $t+1$ coordinates
have distance at least $2(k-t)$ and cannot belong to $\cB$.

We partition $\cC$ into two subsets
\begin{align*}
\cC_1 & = \{ \bldv \in \cC ~:~ \abs{\bldv \cap \bldu} = t+1 ~~ \text{for~some} ~~ \bldu \in \cC \}, \\
\cC_2 & = \{ \bldv \in \cC ~:~ \abs{\bldv \cap \bldu} \geq t+2 ~~ \text{for~all} ~~ \bldu \in \cC \}.
\end{align*}

We distinguish now between two cases depending whether $\cC_2$ is an empty subset or not.

\noindent
{\bf Case 1:} If $\cC_2 = \varnothing$ then $\cC=\cC_1$.
We further claim that for each support $\bldu \in \cC$ there are at most $(q-1)^{k-t}$ codewords in $\cB$ whose support is~$\bldu$.
By the definition of $\cC_1$ there exists a codeword $\bldv \in \cC$ such that $\abs{\bldu \cap \bldv}=t+1$.
The number of codewords of $\cB$ with the support $\bldu$, where the values of all these codewords in $\bldu \cap \bldv$
are equal is at most $(q-1)^{k-t-1}$,
which is the number of distinct assignments to the other $k-t-1$ coordinates. Let $\bldx , \bldy \in \cB$ be two codewords
such that $\bldu = \support (\bldx)$ and $\bldv = \support (\bldy)$. Since the diameter of $\cB$ is $2(k-t)-1$, and
$\abs{\bldu \cap \bldv} = t+1$, it follows that $\bldx$ and $\bldy$ have different values in at most one coordinate
of $\bldu \cap \bldv$.
If two codewords of $\cB$ whose support is $\bldu$ differ in more than two coordinates of $\bldu \cap \bldv$, then there are no codewords
in $\cB$ whose support is $\bldv$ since its distance from one of these codewords is at least $2(k-t)$.
If two codewords of $\cB$ whose support is $\bldu$ differ in exactly two coordinates of $\bldu \cap \bldv$, say the first and the second, then w.l.o.g.
these codewords are of the form $11 \cdots$ and $22 \cdots$ which implies that all the codewords in $\cB$
whose support is $\bldv$ are of the form $12 \cdots$ and $21 \cdots$ and hence there cannot be any codewords in $\cB$ whose support is $\bldu$
with another form.
Therefore, the number of distinct values for $\bldu \cap \bldv$ is at most $q-1$ which implies that
the number of codewords in $\cB$ whose support is $\bldu$ is at most $(q-1)^{k-t-1} \cdot (q-1) = (q-1)^{k-t}$.
Since by Theorem~\ref{thm:Wilson} we have that $\abs{\cC} \leq \binom{n-t-1}{k-t-1}$ for $n \geq (t+1)(k-t+1)$, it follows
that $\abs{\cB} \leq \binom{n-t-1}{k-t-1} (q-1)^{k-t} = \abs{\cA_q(t,k,n)}$ for large enough $n$.

\noindent
{\bf Case 2:} If $\cC_2 \neq \varnothing$ then
consider any $\bldu \in \cC_2$. Each codeword of $\cC$ intersects $\bldu$
in at least $t+2$ coordinates. Hence, we have that
$$
\abs{\cC} \leq \sum_{i=t+2}^k \binom{k}{i} \binom{n-k}{k-i}~.
$$
As in Case 1 we have that each codeword of $\cC_1$ is a support for at most $(q-1)^{k-t}$ codewords of~$\cB$. A codeword of $\cC_2$
can support at most $(q-1)^k$ codewords of $\cB$ since there are $q-1$ possible assignments for each of the $k$ coordinates of the support.
Hence, a codeword in $\cC$ can support at most $(q-1)^k$ codewords of $\cB$.
This implies that
\begin{equation}
\label{eq:bound_onB}
\abs{\cB} \leq (q-1)^k \sum_{i=t+2}^k \binom{k}{i} \binom{n-k}{k-i}  .
\end{equation}
$\sum_{i=t+2}^k \binom{k}{i} \binom{n-k}{k-i}$ is a polynomial
of degree $k-t-2$ in $n$, while $\abs{\cA_q (t,k,n)}= \binom{n-t-1}{k-t-1} (q-1)^{k-t}$ is
a polynomial in $n$ whose degree is $k-t-1$. Hence~(\ref{eq:bound_onB}) implies that
for large enough $n$ we have that $\cB$ is smaller than $\cA_q (t,k,n)$.

In both cases, we have proved that $\abs{\cB} \leq \abs{\cA_q(t,k,n)}$, i.e., $\cA_q(t,k,n)$ is a maximum size anticode
with diameter $2(k-t)-1$ for large enough~$n$.
\end{proof}

The anticode $\cA_q(t,k,n)$ is not a $t$-intersecting family of maximum size. The proof of Theorem~\ref{thm:asymA} fails
in this case since $\cC$ is a $(t+1)$-intersecting family and a $t$-intersecting family can be larger than $\cA_q(t,k,n)$.
Indeed, there exists such a $t$-intersecting family, which is $\cF_q(t,k,n)$.

We can present aa value of $n$ in Theorem~\ref{thm:asymA} such that the theorem is true for all values
above this $n$. The value should imply that $n$ is large enough for Case 2 or in another words
$$
(q-1)^k \sum_{i=t+2}^k \binom{k}{i} \binom{n-k}{k-i} < \binom{n-t-1}{k-t-1} (q-1)^{k-t} ~.
$$
This implies that it is enough to require $n > (q-1)^t (k-t-1)^2 \binom{k}{k/2} + 2k-t-2$ in Theorem~\ref{thm:asymA}.

Now, we define $\cA'_q(t,k,n) \triangleq \cF_q (t,k,n)$, i.e., $\cF_q (t,k,n)$ is defined as an anticode.
\begin{lemma}
\label{lem:propAT}
If $n \geq 2k-t$ then $\cA'_q (t,k,n)$ is an $(n,2(k-t),k)_q$ anticode
with $\binom{n-t}{k-t} (q-1)^{k-t}$ codewords.
\end{lemma}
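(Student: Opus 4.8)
The plan is to verify the two separate claims packaged in the statement: that $\cA'_q(t,k,n)$ is a constant-weight code of weight $k$ with diameter exactly $2(k-t)$, and that it has the stated number of codewords. Since $\cA'_q(t,k,n)$ is by definition equal to $\cF_q(t,k,n)$, the count of $\binom{n-t}{k-t}(q-1)^{k-t}$ codewords is already supplied by Lemma~\ref{lem:propF}, so no new work is needed there; that every word has weight exactly $k$ is immediate from the definition of $\cF_q(t,k,n)$ (the first $t$ coordinates contribute weight $t$ and the last $n-t$ coordinates contribute weight $k-t$), which is what makes it a constant-weight code. The real content is the computation of the diameter.

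For the diameter I would argue as in the proof of Lemma~\ref{lem:propA}, but the argument is even cleaner here since there is no distinguished coordinate $\blda$. Take two codewords $\bldx,\bldy\in\cF_q(t,k,n)$. They agree on the first $t$ coordinates (both are all-ones there), so $\distance(\bldx,\bldy)$ equals the Hamming distance between their restrictions to the last $n-t$ coordinates, each of which is a word of weight $k-t$. First I would establish the upper bound: writing $m$ for the size of the intersection of the two supports restricted to the last $n-t$ coordinates, the distance is at most $((k-t)-m)+((k-t)-m)+m = 2(k-t)-m \le 2(k-t)$, the three terms counting the coordinates nonzero only in $\bldx$, nonzero only in $\bldy$, and nonzero in both (where at most $m$ disagreements can occur). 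Hence the diameter is at most $2(k-t)$.

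Next I would show this bound is attained, which is exactly where the hypothesis $n\ge 2k-t$ is used. The condition $n\ge 2k-t$ is equivalent to $n-t\ge 2(k-t)$, so there is enough room in the last $n-t$ coordinates to place two weight-$(k-t)$ words on disjoint supports; such a pair has $m=0$ and therefore distance $2(k-t)$. Combining the two bounds gives diameter exactly $2(k-t)$, so $\cA'_q(t,k,n)$ is an $(n,2(k-t),k)_q$ anticode.

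There is no genuine obstacle in this lemma: it is a direct distance computation mirroring Lemma~\ref{lem:propA}. The only points requiring care are confirming that the maximum distance is not inadvertently exceeded — handled by the $2(k-t)-m$ upper bound — and confirming that the extremal pair actually exists, which is precisely the role of the hypothesis $n\ge 2k-t$. I would close by remarking that, unlike the odd-diameter case, the even diameter arises with no auxiliary coordinate, so the anticode simply coincides with the intersecting family $\cF_q(t,k,n)$ itself.
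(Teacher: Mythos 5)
Your proof is correct and follows essentially the same route as the paper: the codeword count via Lemma~\ref{lem:propF} and attainment of distance $2(k-t)$ by a pair with disjoint supports in the last $n-t$ coordinates, which is exactly where $n \geq 2k-t$ enters. The only difference is that you make the upper bound $\distance(\bldx,\bldy) \leq 2(k-t)-m$ explicit, which the paper leaves implicit; this is a welcome but minor completion, not a different argument.
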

\begin{proof}
Since $n \geq 2k-t$ and the weight in the last $n-t$ coordinates is at most $k-t$, it follows that there exist
two codewords in $\cA'_q (t,k,n)$ whose nonzero entries in the last $n-t$ coordinates are in disjoint coordinates.
For two such codewords the distance is $2(k-t)$ which is the diameter of the anticode.
The size of $\cA'_q (t,k,n)$ follows immediately by choosing the $k-t$ nonzero coordinates, each one
can be assigned with $q-1$ possible values (see also Lemma~\ref{lem:propF}).
\end{proof}

It will be proved now that similarly to $\cA_q(t,k,n)$ also $\cA'_q(t,k,n)$ is a maximum size anticode if $n$ is large enough.
The proof will be similar to the one of Theorem~\ref{thm:asymA}.

\begin{theorem}
\label{thm:asymAT}
If $n$ is large enough, then $\cA'_q (t,k,n)$ is a maximum size anticode for given $q > 2$, $k >t \geq 0$, and
diameter $2(k-t)$.
\end{theorem}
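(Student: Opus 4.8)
The plan is to mirror the structure of the proof of Theorem~\ref{thm:asymA}, adapting it to the even diameter $D=2(k-t)$. Let $\cB$ be an $(n,2(k-t),k)_q$ anticode and let $\cC=\{\support(\bldx):\bldx\in\cB\}$ be the set of supports. The first step is to identify the correct intersection parameter for~$\cC$. Since the diameter is now $2(k-t)$ rather than $2(k-t)-1$, two codewords whose supports intersect in fewer than $t$ coordinates have distance at least $2(k-t)+1>2(k-t)$, and hence cannot both lie in $\cB$; so $\cC$ is a $t$-intersecting family. By Theorem~\ref{thm:Wilson} this gives $\abs{\cC}\le\binom{n-t}{k-t}$ for $n\ge(t+1)(k-t+1)$.

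Next I would partition $\cC$ by the minimal intersection it achieves, exactly as in Theorem~\ref{thm:asymA} but shifted up by one: set $\cC_1=\{\bldv\in\cC: \abs{\bldv\cap\bldu}=t \text{ for some } \bldu\in\cC\}$ and $\cC_2=\{\bldv\in\cC: \abs{\bldv\cap\bldu}\ge t+1 \text{ for all } \bldu\in\cC\}$, and split into the cases $\cC_2=\varnothing$ and $\cC_2\neq\varnothing$. In Case~1, the key counting claim is that each support $\bldu\in\cC_1$ carries at most $(q-1)^{k-t}$ codewords of $\cB$. Here I would pick $\bldv\in\cC$ with $\abs{\bldu\cap\bldv}=t$; the $k-t$ coordinates of $\bldu$ outside $\bldu\cap\bldv$ contribute a factor $(q-1)^{k-t-1}$ wait --- more carefully, the $k-t$ coordinates outside the intersection are free, giving $(q-1)^{k-t}$ already if the values on $\bldu\cap\bldv$ were fixed. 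The subtlety is that the diameter constraint $\distance(\bldx,\bldy)\le 2(k-t)$ for $\support(\bldx)=\bldu$, $\support(\bldy)=\bldv$, together with $\abs{\bldu\cap\bldv}=t$, forces $\bldx$ and $\bldy$ to agree on \emph{all} $t$ coordinates of $\bldu\cap\bldv$ (since the $k-t$ symmetric-difference coordinates on each side already account for the full diameter); this pins the values on $\bldu\cap\bldv$ to be common, so the number of codewords on $\bldu$ is at most $(q-1)^{k-t}$. Combining with $\abs{\cC}\le\binom{n-t}{k-t}$ yields $\abs{\cB}\le\binom{n-t}{k-t}(q-1)^{k-t}=\abs{\cA'_q(t,k,n)}$.

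In Case~2, I would fix any $\bldu\in\cC_2$; every support in $\cC$ meets $\bldu$ in at least $t+1$ coordinates, so $\abs{\cC}\le\sum_{i=t+1}^k\binom{k}{i}\binom{n-k}{k-i}$, a polynomial in $n$ of degree $k-t-1$. Since any support carries at most $(q-1)^k$ codewords, we obtain
\begin{equation*}
\abs{\cB}\le (q-1)^k\sum_{i=t+1}^k\binom{k}{i}\binom{n-k}{k-i}.
\end{equation*}
The right-hand side has degree $k-t-1$ in $n$, and this is the crux: in Theorem~\ref{thm:asymA} the analogous bound had degree $k-t-2$, strictly below the degree $k-t-1$ of $\abs{\cA'_q(t,k,n)}=\binom{n-t}{k-t}(q-1)^{k-t}$, so the comparison was immediate for large~$n$. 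Now both sides have the \emph{same} degree $k-t-1$, so I must compare leading coefficients. The leading term of $\binom{n-t}{k-t}(q-1)^{k-t}$ is $(q-1)^{k-t}n^{k-t}/(k-t)!$ --- degree $k-t$, not $k-t-1$ --- so in fact $\abs{\cA'_q(t,k,n)}$ has degree $k-t$, one higher than the Case~2 bound, which restores a strict degree gap and settles Case~2 for large~$n$ just as before.

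The main obstacle I anticipate is the agreement argument in Case~1: verifying that with $\abs{\bldu\cap\bldv}=t$ the diameter constraint genuinely forces full agreement on $\bldu\cap\bldv$ (the even-diameter analogue of the ``differ in at most one coordinate'' step of Theorem~\ref{thm:asymA}, where the parity now removes the extra slack and makes the bound come out to exactly $(q-1)^{k-t}$ rather than $(q-1)^{k-t}$ via a $q-1$ factor). Once that counting claim is secured, the two cases close exactly as in Theorem~\ref{thm:asymA}, and one can extract an explicit threshold on $n$ of the same form as the one stated after that theorem.
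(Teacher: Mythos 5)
Your proposal is correct and takes essentially the same route as the paper's proof: the same reduction to the support family $\cC$, the same observation that $\cC$ is $t$-intersecting, the identical partition into $\cC_1$ and $\cC_2$, the same forced-agreement argument on a size-$t$ intersection giving at most $(q-1)^{k-t}$ codewords per support in Case~1, and the same polynomial-degree comparison in Case~2. Your two mid-proof hesitations both resolve correctly --- the per-support count is indeed $(q-1)^{k-t}$, and $\abs{\cA'_q(t,k,n)}=\binom{n-t}{k-t}(q-1)^{k-t}$ indeed has degree $k-t$ in $n$, one above the degree $k-t-1$ of the Case~2 bound --- so the argument closes exactly as in the paper.
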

\begin{proof}
Let $\cB$ be an $(n,2(k-t),k)_q$ anticode and let $\cC$ be the set of supports of the codewords in~$\cB$, i.e.,
$$
\cC \triangleq \{ \support (\bldx) ~:~ \bldx \in \cB \}~.
$$
The code $\cC$ forms a $t$-intersecting family since the diameter $2(k-t)$ implies that each two codewords of $\cB$ must have
at most $2(k-t)$ positions in which one codeword has a nonzero element of $\Z_q$ and the second codeword has a \emph{zero}.

We partition $\cC$ into two subsets
\begin{align*}
\cC_1 & = \{ \bldv \in \cC ~:~ \abs{\bldv \cap \bldu} = t ~~ \text{for~some} ~~ \bldu \in \cC \}, \\
\cC_2 & = \{ \bldv \in \cC ~:~ \abs{\bldv \cap \bldu} \geq t+1 ~~ \text{for~all} ~~ \bldu \in \cC \}.
\end{align*}

We distinguish now between two cases depending whether $\cC_2$ is an empty subset or not.

\noindent
{\bf Case 1:} If $\cC_2 = \varnothing$ then $\cC=\cC_1$ and by Theorem~\ref{thm:Wilson} we have that $\abs{\cC} \leq \binom{n-k}{k-t}$.
Consider now two distinct codewords $\bldu, \bldv \in \cC$ whose intersection is of size $t$.
This already implies that their associated codewords in $\cB$ have distance $2(k-t)$ and hence their value on the coordinates
of $\bldu \cap \bldv$ are the same. This implies that
the number of codewords in $\cB$ whose support is $\bldu$ is at most $(q-1)^{k-t}$.
Hence, the number of codewords in $\cB$ is at most $\binom{n-t}{k-t} (q-1)^{k-t}$, i.e.,
for large enough $n$ we have that $\cB$ is smaller than $\cA'_q (t,k,n)$.

\noindent
{\bf Case 2:} If $\cC_2 \neq \varnothing$ then
consider any $\bldu \in \cC_2$. Each codeword of $\cC$ intersect $\bldu$
in at least $t+1$ coordinates. Hence, we have that the size of $\cC$ is at most
$$
\sum_{i=t+1}^k \binom{k}{i} \binom{n-k}{k-i}~.
$$
As in the proof of Theorem~\ref{thm:asymA} we have that each codeword of $\cC$ is a support for at most $(q-1)^k$ codewords of~$\cB$.
This implies that
\begin{equation}
\label{eq:bound_onBT}
\abs{\cB} \leq (q-1)^k \sum_{i=t+1}^k \binom{k}{i} \binom{n-k}{k-i}  .
\end{equation}
$\sum_{i=t+1}^k \binom{k}{i} \binom{n-k}{k-i}$ is polynomial of degree $k-t-1$ in $n$,
while $\abs{\cA'_q (t,k,n)}=\binom{n-t}{k-t} (q-1)^{k-t}$ is a polynomial in $n$ whose degree is $k-t$. Hence~(\ref{eq:bound_onBT}) implies that
for large enough $n$ we have that $\cB$ is smaller than $\cA'_q (t,k,n)$.

In both cases, we have proved that $\abs{\cB} \leq \abs{\cA'_q(t,k,n)}$, i.e., $\cA'_q(t,k,n)$ is a maximum size anticode
with diameter $2(k-t)$ for large enough~$n$.
\end{proof}

Similarly to Theorem~\ref{thm:asymA} we can show that for Theorem~\ref{thm:asymAT} it is enough to require that
$n > (q-1)^t (k-t)^2 \binom{k}{k/2} + 2k-t-1$.

We continue to prove that the anticodes $\cA_q (t,k,n)$ and $\cA'_q (t,k,n)$ are not just optimal, if $n$ is large enough then
they are unique for the fixed $q$, $t$, and $k$. We start with $\cA'_q (t,k,n)$ with a proof similar to the one
of Theorem~\ref{thm:uniqueInter}.

\begin{theorem}
\label{thm:uniqueAnti1}
If $n$ is large enough, then $\cA'_q (t,k,n)$ is unique maximum size anticode of length $n$, constant-weight $k$ and with diameter $2(k-t)$
over $\Z_q$.
\end{theorem}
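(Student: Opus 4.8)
The plan is to follow the structure already established in the proof of Theorem~\ref{thm:asymAT} and then adapt the three-codeword argument of Theorem~\ref{thm:uniqueInter} to the diameter setting. Let $\cB$ be any $(n,2(k-t),k)_q$ anticode of maximum size $\binom{n-t}{k-t}(q-1)^{k-t}$, and let $\cC = \{\support(\bldx) : \bldx \in \cB\}$ be its set of supports, a $t$-intersecting family. First I would observe that for large $n$ the maximum size can be met only in Case~1 of Theorem~\ref{thm:asymAT}: the Case~2 estimate in~(\ref{eq:bound_onBT}) is a polynomial in $n$ of degree $k-t-1$, strictly below $\abs{\cA'_q(t,k,n)}$, so a maximum-size $\cB$ forces $\cC_2 = \varnothing$, i.e.\ $\cC = \cC_1$. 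Equality in the chain $\abs{\cB} \le \abs{\cC}\,(q-1)^{k-t} \le \binom{n-t}{k-t}(q-1)^{k-t}$ then forces both that $\cC$ is a maximum-size $t$-intersecting family of size $\binom{n-t}{k-t}$ and that every support in $\cC$ carries exactly $(q-1)^{k-t}$ codewords of $\cB$.

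Next I would invoke the uniqueness part of Theorem~\ref{thm:Wilson}: since $n$ is large (in particular $n > (t+1)(k-t+1)$), the maximum-size $t$-intersecting family $\cC$ is unique up to equivalence, so after a column permutation we may assume that $\cC$ consists of all $k$-subsets containing the fixed $t$-set $\{1,\dots,t\}$. The core of the argument is then to prove that all codewords of $\cB$ share the same values on the first $t$ coordinates. Here I would translate the intersection argument of Theorem~\ref{thm:uniqueInter} into a distance statement: for two codewords $\bldx,\bldy \in \cB$ whose supports meet only in $\{1,\dots,t\}$ (intersection of size exactly $t$), the distance equals $2(k-t)$ plus the number of coordinates among the first $t$ on which they disagree, so the diameter bound $2(k-t)$ forces $\bldx$ and $\bldy$ to agree on all of the first $t$ coordinates.

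Using this, suppose for contradiction that some $\bldx,\bldy \in \cB$ disagree on a coordinate among the first $t$. Their supports must then intersect somewhere in the last $n-t$ coordinates, since otherwise the previous observation would force agreement. For $n$ large enough (here $n \ge 3k-2t$ suffices, so that $n-t-2(k-t) \ge k-t$) there exists a support in $\cC$ containing $\{1,\dots,t\}$ whose remaining $k-t$ coordinates avoid the last-$(n-t)$ parts of both $\support(\bldx)$ and $\support(\bldy)$. Choosing a codeword $\bldz \in \cB$ with that support, $\support(\bldz)$ meets each of $\support(\bldx)$ and $\support(\bldy)$ only in $\{1,\dots,t\}$, so $\bldz$ agrees with both $\bldx$ and $\bldy$ on the first $t$ coordinates; hence $\bldx$ and $\bldy$ agree there as well, a contradiction. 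Thus all codewords of $\cB$ agree on the first $t$ coordinates, and after a symbol permutation we may take this common value to be all \emph{ones}. Finally, since each of the $\binom{n-t}{k-t}$ supports carries exactly $(q-1)^{k-t}$ codewords, and fixing \emph{ones} on the first $t$ coordinates leaves precisely $(q-1)^{k-t}$ admissible nonzero assignments on the remaining $k-t$ support coordinates, $\cB$ must consist of all of them, so $\cB = \cF_q(t,k,n) = \cA'_q(t,k,n)$ up to equivalence.

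The main obstacle I anticipate is the careful bookkeeping of the diameter constraint in terms of supports and disagreements: in particular, checking that a support-intersection of size exactly $t$ forces equality on the \emph{entire} intersection rather than merely bounding the number of disagreements, and tracking the threshold on $n$ so that the required third codeword $\bldz$ exists simultaneously with the Case~1 reduction from Theorem~\ref{thm:asymAT} and the Wilson uniqueness regime. I would therefore take $n$ to be at least the maximum of the explicit threshold stated after Theorem~\ref{thm:asymAT}, the value $(t+1)(k-t+1)$ needed for the uniqueness in Theorem~\ref{thm:Wilson}, and $3k-2t$.
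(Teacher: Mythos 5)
Your proposal is correct and follows essentially the same route as the paper's proof: the reduction to Case~1 of Theorem~\ref{thm:asymAT} forcing $\cC_2=\varnothing$ and $\abs{\cC}=\binom{n-t}{k-t}$, the uniqueness part of Theorem~\ref{thm:Wilson} pinning down $\cC$, and the three-codeword argument (as in Theorem~\ref{thm:uniqueInter}) showing all codewords agree on the first $t$ coordinates. The only differences are cosmetic refinements on your part: the paper anchors the argument at the single codeword $\blde$ while you argue for an arbitrary pair, and you make explicit the distance bookkeeping, the final counting per support, and the threshold $n \geq 3k-2t$, which the paper leaves implicit under ``$n$ large enough.''
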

\begin{proof}
Assume that in Theorem~\ref{thm:asymAT} $\abs{\cB} = \abs{\cA'_q(t,k,n)}$ and
$n$ is large enough. This implies that $\cC_2 = \varnothing$, $\cC = \cC_1$, and
$\abs{\cC_1}=\binom{n-t}{k-t}$.
By the Erd\"{o}s-Ko-Rado theorem (Theorem~\ref{thm:Wilson}), we know that $\cC$ consists
of all $k$-subsets that include a fixed $t$-subset of coordinates.
W.l.o.g., this $t$-subset is $\{1,\ldots,t\}$, and w.l.o.g.,
$\cB$ contains the word $\mathbf e = (\overbrace{1 \cdots 1}^{k ~ \text{times}} \overbrace{0 \cdots \cdots 0}^{n-k ~ \text{times}})$.
Now, any other codeword in~$\cB$ that has nonzero symbols in the first
$t$ coordinates and zeros in the next $k-t$ positions must start
with $t$ \emph{ones} (otherwise, the distance from $\mathbf e$ will be larger than $2(k-t)$).

Assume, to the contrary that there exists a codeword $\bldu$ in $\cB$ that does not start with $t$ \emph{ones}.
To have distance at most $2(k-t)$ between $\bldu$ and $\mathbf e$, $\bldu$ must have at least one nonzero
symbol in the next $k-t$ position. Now, consider a third codeword $\bldv \in \cB$ whose support intersects $\blde$ and $\bldu$ only
in the first $t$ positions. The codeword $\bldv$ must have the same values in the
first $t$ positions as $\blde$ and the same values in the first $t$ positions as $\bldu$, a contradiction.
Therefore, all the codewords in $\cB$ start with $t$ \emph{ones} which implies that $\cB$ is equivalent to $\cA'_q(t,k,n)$.
\end{proof}

The uniqueness of the anticode $\cA_q (t,k,n)$ is proved similarly to the uniqueness of the anticode $\cA'_q (t,k,n)$.
\begin{theorem}
\label{thm:uniqueAnti2}
If $n$ is large enough and $q>3$, then $\cA_q (t,k,n)$ is unique maximum size anticode of length $n$,
constant-weight $k$ and with diameter $2(k-t)-1$ over $\Z_q$.
\end{theorem}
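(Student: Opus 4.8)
The plan is to start from the equality case of Theorem~\ref{thm:asymA}. Suppose $\abs{\cB}=\abs{\cA_q(t,k,n)}$ for $n$ large, where $\cB$ is an $(n,2(k-t)-1,k)_q$ anticode and $\cC$ is its set of supports. Since Case~2 of the proof of Theorem~\ref{thm:asymA} gives a strictly smaller bound for large $n$, equality forces us into Case~1, so $\cC_2=\varnothing$ and $\cC=\cC_1$ is a $(t+1)$-intersecting family of the maximum size $\binom{n-t-1}{k-t-1}$. As each support carries at most $(q-1)^{k-t}$ codewords and the total is $\binom{n-t-1}{k-t-1}(q-1)^{k-t}$, every support in $\cC$ must carry \emph{exactly} $(q-1)^{k-t}$ codewords of $\cB$. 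Applying the uniqueness part of the Erd\H{o}s--Ko--Rado theorem (Theorem~\ref{thm:Wilson} with $t+1$ in place of $t$, valid for large $n$), $\cC$ consists of all $k$-subsets containing a fixed $(t+1)$-subset, w.l.o.g.\ $\{1,\ldots,t+1\}$; thus every codeword of $\cB$ is nonzero on $\{1,\ldots,t+1\}$.

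Next I would fix a support $\bldu\in\cC$ and analyze the restrictions to $\{1,\ldots,t+1\}$ of the $(q-1)^{k-t}$ codewords supported on $\bldu$. Choosing a support $\bldv\in\cC$ whose $k-t-1$ extra coordinates are disjoint from those of $\bldu$ (possible for large $n$), the diameter bound forces any $\bldu$-codeword and any $\bldv$-codeword to differ in at most one coordinate of $\bldu\cap\bldv=\{1,\ldots,t+1\}$, exactly as in Case~1 of Theorem~\ref{thm:asymA}. Re-running that bookkeeping shows the restrictions of the $\bldu$-codewords pairwise differ in at most two coordinates, and that if some pair differs in exactly two coordinates, then there are exactly two such restrictions, of the form $11\cdots$ and $22\cdots$. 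This is where $q>3$ is indispensable: two restrictions can realize at most two distinct patterns on $\{1,\ldots,t+1\}$, whereas to account for the full count $(q-1)^{k-t}$ the number of distinct restrictions must be at least $q-1\geq 3$. Hence the two-coordinate case is impossible, all restrictions pairwise differ in exactly one coordinate, and therefore they agree on $\{1,\ldots,t+1\}$ except on a single coordinate $i(\bldu)$, where they take all $q-1$ nonzero values; a product-size count then shows the $(q-1)^{k-t}$ codewords on $\bldu$ form the full product of these $q-1$ patterns with all $(q-1)^{k-t-1}$ nonzero assignments to the extra coordinates.

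Finally I would globalize $i(\bldu)$ and the constants. For two supports $\bldu,\bldu'$ with disjoint extra parts, comparing a $\bldu$-codeword with a $\bldu'$-codeword (they differ in at most one coordinate of $\{1,\ldots,t+1\}$) while letting the free symbols range over all nonzero values forces $i(\bldu)=i(\bldu')$ and then forces the constant values on $\{1,\ldots,t+1\}\setminus\{i(\bldu)\}$ to agree. Since for large $n$ the Kneser-type graph on the extra $(k-t-1)$-subsets (disjoint pairs adjacent) is connected, a single free coordinate $i$ and a single tuple of constants serve all supports. After relabeling coordinates so that $i=t+1$ and applying a symbol permutation on each coordinate $1,\ldots,t$ to send its constant to $1$, the set $\cB$ becomes exactly $\cA_q(t,k,n)$, establishing uniqueness up to equivalence.

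I expect the main obstacle to be the pattern analysis of the second paragraph: verifying that a two-coordinate discrepancy collapses to precisely two restrictions, and that this is incompatible with the required count when $q>3$. That single step carries the whole dependence on the hypothesis $q>3$ and involves the most delicate diameter bookkeeping; by contrast the globalization of the third paragraph is routine once connectivity of the disjointness graph (guaranteed for large $n$) is available.
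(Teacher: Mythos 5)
Your proposal is correct and follows essentially the same route as the paper: equality in Theorem~\ref{thm:asymA} forces $\cC_2=\varnothing$ and, via the uniqueness part of Theorem~\ref{thm:Wilson}, pins $\cC$ to all $k$-subsets containing a fixed $(t+1)$-subset, after which the restriction-pattern analysis on $\{1,\ldots,t+1\}$ uses $q>3$ exactly where the paper does (the two-pattern configuration supports only $2(q-1)^{k-t-1}<(q-1)^{k-t}$ codewords, the same count highlighted in the paper's $q=3$ remark). The only cosmetic divergence is the final globalization step, where you compare supports with disjoint tails and invoke connectivity of the disjointness graph, while the paper normalizes by the codeword $\blde$ and derives a contradiction from a third codeword $\bldz$ carrying a fresh symbol $\gamma\notin\{\alpha,\beta\}$ --- both are routine for large~$n$.
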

\begin{proof}
Assume that in Theorem~\ref{thm:asymA} $\abs{\cB} = \abs{\cA_q(t,k,n)}$ and
$n$ is large enough. This implies that $\cC_2 = \varnothing$, $\cC = \cC_1$, and
$\abs{\cC_1}=\binom{n-t-1}{k-t-1}$. Since $\cC_1$ is a $(t+1)$-intersecting family, it follows by its size that
it is a maximum size family and by the Erd\"{o}s-Ko-Rado theorem (Theorem~\ref{thm:Wilson}), we know that $\cC$ consists
of all $k$-subsets that include a fixed $(t+1)$-subset of coordinates.
W.l.o.g., this $(t+1)$-subset is $\{1,\ldots,t,t+1\}$, and w.l.o.g.,
$\cB$ contains the word $\blde = (\overbrace{1 \cdots 1}^{k ~ \text{times}} \overbrace{0 \cdots \cdots 0}^{n-k ~ \text{times}})$.

Now, any other codeword in~$\cB$ that has nonzero symbols in the first
$t+1$ coordinates and zeros in the next $k-t-1$ coordinates must have at least
$t$ \emph{ones} in the first $t+1$ coordinates (otherwise, the distance from $\mathbf e$ will be larger than $2(k-t)-1$).
Since $q>3$ it follows from the structure of $\cC$ as explained in the proof of Theorem~\ref{thm:asymA} that all these
codewords have \emph{ones} in the first $t$ positions and a nonzero alphabet letter in the next position, where there are
codewords with each nonzero letter.

Assume there exists a codeword $\bldu \in \cB$ whose first $t$ coordinates have some values different from \emph{one} and a nonzero symbol $\alpha$
in position $t+1$. Let $\bldv$ another codeword in $\cB$ with \emph{ones} in the first $t$ positions and a nonzero symbol $\beta \neq \alpha$ in
position $t+1$. To avoid distance larger than $2(k-t)-1$ between $\bldu$ and $\bldv$, their supports must intersect in one more
coordinate outside the first $t+1$ coordinates. There exists a codeword $\bldz \in \cC$ whose support
does not intersect the support of $\bldu$ in the last $n-t-1$ coordinates and does not intersect the
support of $\bldv$ in the last $n-t$ coordinates. Moreover, $\bldz$ has a nonzero symbol $\gamma \notin \{ \alpha,\beta \}$ in position $t+1$.
To avoid distance larger than $2(k-t)-1$ between codewords,
this codeword $\bldz$ must have the same values as $\bldu$ in the first $t$ coordinates
and the same values as $\bldv$ in the first $t$ coordinates, a contradiction since $\bldu$ and $\bldv$ have different values in the
first $t$ coordinates. Therefore, all the codewords of $\cC$ have the same values on the first $t$ coordinates, w.l.o.g. \emph{ones}.

Thus, the unique maximum size $\cA_q(t,k,n)$ whose size is $\binom{n-t-1}{k-t-1} (q-1)^{k-t}$,
contains all the $\binom{n-t-1}{k-t-1} (q-1)^{k-t}$ words of weight $k$ over $\Z_q$ with \emph{ones}
in the first $t$ coordinates and a nonzero symbol in the $(t+1)$th coordinate.
\end{proof}

If $q=3$, then the same proof of Theorem~\ref{thm:uniqueAnti2} works as well. But, we need
to consider the case where all codewords of~$\cC$
whose support is~$\blde$ either begin with $t+1$ ones or
w.l.o.g. with $22$ and then $t-1$ ones
(there are $2\cdot(q-1)^{k-t-1}$ such words, which coincides with
$(q-1)^{k-t}$ for $q=3$).
For any support~$\bldv$ such that $\abs{\blde \cap \bldv}=t+1$,
only the words beginning with $12\overbrace{1\ldots1}^{t-1}$ or $21\overbrace{1\ldots1}^{t-1}$
can belong to~$\cB$. Since there are $2^{k-t}$ such words with support $\bldv$, $\mathcal B$ contains all of them.
Now, implying $n-t-1\ge3(k-t-1)$, we consider a third support $\bldz \in \mathcal C$ at maximum distance from both $\mathbf e$ and~$\mathbf v$, i.e., $\abs{\mathbf e \cap \bldz}=\abs{\bldv \cap \bldz}=t+1$. It is not difficult to verify that such codewords with support~$\bldz$ cannot exist
and the proof is completed.

\section{A Hierarchy of Anticodes}
\label{sec:hierarchy}

In this section we define two sequences of anticodes,
where each sequence has one anticode which was proved to be optimal asymptotically.
It will be proved that any two anticodes in the sequence are incomparable, i.e.,
one is larger for one range of alphabet sizes and lengths of the codewords
and the second is larger for the other alphabet sizes and lengths of the codewords.
For comparing two anticodes we should have that the alphabet size $q$,
the length of the words $n$, their weight $k$, and their diameter $D$ are the same for the two anticodes.

We start by defining the anticodes in these sequences. For $0 \leq \epsilon \leq k-t$, let
$$
\cA_q (t,\epsilon,k,n) \triangleq \{ ( \overbrace{1\cdots \cdots 1}^{t ~ \text{times}} a_1 \cdots a_\epsilon b_1 \cdots b_{n-t-\epsilon}) ~:~
a_i \in \Z_q \setminus \{0\}, ~  b_i \in \Z_q , ~ \weight{(b_1 \cdots b_{n-t-\epsilon})}=k-t-\epsilon \}
$$
The codewords of the anticode $\cA_q (t,\epsilon,k,n)$ have three parts: part $\A$ which consists of the first $t$ \emph{ones} in the codeword,
part $\C$ which consists of the last $n-t-\epsilon$ entries in the codewords, and part~$\B$ which consists of the middle $\epsilon$
entries in the codeword.

We note that $\cA_q (t,1,k,n) = \cA_q (t,k,n)$ and $\cA_q (t,0,k,n) = \cA'_q (t,k,n)$.
We have already proved in Theorem~\ref{thm:asymA} and Theorem~\ref{thm:asymAT}, respectively that
these anticodes are maximum size anticodes for odd diameter $D=2(k-t)-1$ and even diameter $D=2(k-t)$, respectively.

\begin{lemma}
\label{lem:diamHier}
If $n \geq 2k-t-\epsilon$, then the anticode $\cA_q (t,\epsilon,k,n)$ has diameter $2k-2t-\epsilon$.
\end{lemma}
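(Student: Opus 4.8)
The plan is to compute the diameter of $\cA_q(t,\epsilon,k,n)$ directly from its structure, by exhibiting two codewords realizing the claimed distance $2k-2t-\epsilon$ and then arguing no two codewords can be farther apart. Recall the codewords split into three parts: part $\A$ is $t$ fixed \emph{ones}, part $\B$ consists of $\epsilon$ coordinates each carrying a nonzero symbol, and part $\C$ consists of $n-t-\epsilon$ coordinates of which exactly $k-t-\epsilon$ are nonzero. Two codewords always agree on part $\A$, so they contribute nothing to the Hamming distance there; the whole distance is accumulated on parts $\B$ and $\C$.

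**First I would** establish the lower bound by constructing an extremal pair. In part $\C$, since $n-t-\epsilon \geq (2k-2t-2\epsilon)$ — which is exactly what the hypothesis $n \geq 2k-t-\epsilon$ guarantees — I can place the $k-t-\epsilon$ nonzero symbols of two codewords in \emph{disjoint} coordinate sets, so that part $\C$ contributes distance $2(k-t-\epsilon)$. In part $\B$, I choose the two codewords to carry \emph{distinct} nonzero symbols in every one of the $\epsilon$ coordinates (possible since $q>2$ gives at least two nonzero symbols to alternate between), contributing an additional $\epsilon$ to the distance. The total is $2(k-t-\epsilon)+\epsilon = 2k-2t-\epsilon$, so the diameter is at least this value.

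**Next I would** prove the matching upper bound: for \emph{any} two codewords $\bldx,\bldy \in \cA_q(t,\epsilon,k,n)$, the distance is at most $2k-2t-\epsilon$. Part $\A$ contributes $0$. Part $\B$ has $\epsilon$ coordinates, each nonzero in both words, so it contributes at most $\epsilon$. For part $\C$, each of $\bldx,\bldy$ has exactly $k-t-\epsilon$ nonzero coordinates there; the number of positions where they differ is at most the size of the symmetric difference of their supports plus the positions inside the common support where the symbols clash, and this is bounded by $2(k-t-\epsilon)$ (attained when the supports are disjoint). Summing the three parts gives distance at most $\epsilon + 2(k-t-\epsilon) = 2k-2t-\epsilon$, as required.

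**The only point needing care** is the bookkeeping on part $\C$: one must check that the maximum over all support configurations of the number of differing coordinates is indeed $2(k-t-\epsilon)$, not larger, even when the two supports overlap and the overlapping symbols may or may not coincide. This is routine but worth stating explicitly, since overlap can only reduce the symmetric-difference contribution faster than it adds clash contributions (each overlap position removes two from the symmetric difference count but adds at most one clash). Combining the lower and upper bounds yields that the diameter equals exactly $2k-2t-\epsilon$ under the hypothesis $n \geq 2k-t-\epsilon$, which is precisely the room needed to realize disjoint supports in part $\C$.
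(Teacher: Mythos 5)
Your proof is correct and takes essentially the same approach as the paper: exhibit two codewords that agree on part $\A$, differ in every coordinate of part $\B$ (using two distinct nonzero symbols, available since $q>2$), and have disjoint supports in part $\C$ (possible exactly because $n-t-\epsilon \geq 2(k-t-\epsilon)$), realizing distance $\epsilon + 2(k-t-\epsilon) = 2k-2t-\epsilon$. If anything, you are more complete than the paper, which only constructs the extremal pair and asserts maximality, whereas you verify the upper bound explicitly (each differing position in part $\C$ lies in the union of the two supports, of size at most $2(k-t-\epsilon)$), and that verification is sound.
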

\begin{proof}
Consider the following two codewords of $\cA_q (t,\epsilon,k,n)$,
$$
\bldc = ( ( \overbrace{1\cdots \cdots 1}^{t ~ \text{times}} \overbrace{1\cdots \cdots 1}^{\epsilon ~ \text{times}} \overbrace{1\cdots \cdots 1}^{k-t-\epsilon ~ \text{times}} \overbrace{0\cdots \cdots 0}^{n-k ~ \text{times}})
$$
and
$$
\bldc' = ( ( \overbrace{1\cdots \cdots 1}^{t ~ \text{times}} \overbrace{2 \cdots \cdots 2}^{\epsilon ~ \text{times}} \overbrace{0\cdots \cdots 0}^{n-k ~ \text{times}} \overbrace{1\cdots \cdots 1}^{k-t-\epsilon ~ \text{times}} ).
$$
Since $n \geq 2k-t-\epsilon$, i.e., $n-t-\epsilon \geq 2(k-t-\epsilon)$, it follows that
$$
\distance(\bldc,\bldc')= \epsilon + 2(k-t-\epsilon)= 2k-2t-\epsilon.
$$
These two codewords are of maximum distance in the anticode $\cA_q (t,\epsilon,k,n)$.
Hence, the anticode $\cA_q (t,\epsilon,k,n)$ has diameter $2k-2t-\epsilon$.
\end{proof}

\begin{lemma}
\label{lem:anti_size}
The size of the anticode $\cA_q (t,\epsilon,k,n)$ is $\binom{n-t-\epsilon}{k-t-\epsilon} (q-1)^{k-t}$.
\end{lemma}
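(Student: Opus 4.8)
The plan is to count the codewords of $\cA_q(t,\epsilon,k,n)$ directly, by decomposing each word into its three parts $\A$, $\B$, $\C$ and multiplying the number of independent choices available for each part, exactly in the spirit of the proofs of Lemma~\ref{lem:propF} and Lemma~\ref{lem:propA}.

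First I would note that part $\A$ (the first $t$ coordinates) is fixed to \emph{ones}, so it contributes a single choice. Next, part $\B$ (the middle $\epsilon$ coordinates) consists of entries $a_i \in \Z_q \setminus \{0\}$, each chosen independently from the $q-1$ nonzero values, contributing $(q-1)^\epsilon$ possibilities. Finally, part $\C$ (the last $n-t-\epsilon$ coordinates) is a word of weight $k-t-\epsilon$: I would first select which $k-t-\epsilon$ of the $n-t-\epsilon$ coordinates carry nonzero values, giving $\binom{n-t-\epsilon}{k-t-\epsilon}$ possible supports, and then assign to each such nonzero coordinate one of the $q-1$ nonzero values, giving $(q-1)^{k-t-\epsilon}$ assignments.

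Since the selections in the three parts are mutually independent, the product rule yields
$$
\abs{\cA_q(t,\epsilon,k,n)} = (q-1)^\epsilon \cdot \binom{n-t-\epsilon}{k-t-\epsilon} (q-1)^{k-t-\epsilon}.
$$
The only point requiring care is the combination of the exponents of $q-1$: the factor $(q-1)^\epsilon$ coming from part $\B$ and the factor $(q-1)^{k-t-\epsilon}$ coming from part $\C$ multiply to $(q-1)^{k-t}$, which is independent of $\epsilon$ and produces the claimed size $\binom{n-t-\epsilon}{k-t-\epsilon}(q-1)^{k-t}$. There is no genuine obstacle here beyond this bookkeeping; as a consistency check, the special cases $\epsilon=1$ and $\epsilon=0$ recover the sizes of $\cA_q(t,k,n)$ and $\cA'_q(t,k,n)$ already established in Lemma~\ref{lem:propA} and Lemma~\ref{lem:propF}, respectively.
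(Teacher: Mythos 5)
Your count is correct and follows exactly the paper's own argument: the paper's proof likewise multiplies the $(q-1)^\epsilon$ choices for part $\B$ by the $\binom{n-t-\epsilon}{k-t-\epsilon}(q-1)^{k-t-\epsilon}$ combinations for part $\C$, merging the exponents to $(q-1)^{k-t}$. Your consistency check against the cases $\epsilon=0$ and $\epsilon=1$ is a nice extra but adds nothing the paper's proof lacks.
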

\begin{proof}
The number of possible combinations in part $\B$ is $(q-1)^\epsilon$ and the number of the combinations in part $\C$ is
$\binom{n-t-\epsilon}{k-t-\epsilon} (q-1)^{k-t-\epsilon}$, which implies the claim in the lemma.
\end{proof}

Since the diameter $D$ of the anticode $\cA_q (t,\epsilon,k,n)$ is $2k-2t-\epsilon$, it follows that this diameter
can be even or odd, depending whether $\epsilon$ is even or odd, respectively.
For $\cA_q (t,\epsilon,k,n)$ we have that the diameter is $D=2k-2t-\epsilon$ and this anticode
will be compared with the anticode $\cA_q (t',\epsilon',k,n)$,
where $\epsilon' = \epsilon+2$, $t'=t-1$, and hence its diameter is $D'=D$.

\begin{lemma}
\label{lem:larger_smaller}
When $n \geq 2k -t - \epsilon$, the anticode $\cA_q (t',\epsilon',k,n)=\cA_q (t-1,\epsilon+2,k,n)$ is larger than $\cA_q (t,\epsilon,k,n)$ if and only if
$q > \frac{n+k-2t-2\epsilon}{k-t-\epsilon}$ or equivalently $n < (q-2)(k-t-\epsilon)+k$ (both anticodes have diameter $2k-2t-\epsilon$).
\end{lemma}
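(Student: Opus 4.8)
The plan is to compute the sizes of the two anticodes explicitly using Lemma~\ref{lem:anti_size}, confirm via Lemma~\ref{lem:diamHier} that both share the diameter $2k-2t-\epsilon$, and then reduce the comparison to a single elementary inequality in the parameters. First I would record that, by Lemma~\ref{lem:anti_size} applied with the parameters $t'=t-1$ and $\epsilon'=\epsilon+2$,
\[
\abs{\cA_q(t-1,\epsilon+2,k,n)} = \binom{n-t-\epsilon-1}{k-t-\epsilon-1}(q-1)^{k-t+1},
\]
since $n-t'-\epsilon' = n-t-\epsilon-1$, $k-t'-\epsilon' = k-t-\epsilon-1$, and $k-t' = k-t+1$; this should be contrasted with $\abs{\cA_q(t,\epsilon,k,n)} = \binom{n-t-\epsilon}{k-t-\epsilon}(q-1)^{k-t}$. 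I would also note that for this comparison to be meaningful we need $\epsilon+2 \le k-(t-1)$, i.e.\ $\epsilon \le k-t-1$, so that $k-t-\epsilon \ge 1$; and that the hypothesis $n \ge 2k-t-\epsilon$ guarantees (in particular $n \ge 2k-t'-\epsilon'$) that both anticodes attain diameter $2k-2t-\epsilon$ by Lemma~\ref{lem:diamHier}, so the comparison is legitimate.

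Next I would form the inequality $\abs{\cA_q(t-1,\epsilon+2,k,n)} > \abs{\cA_q(t,\epsilon,k,n)}$ and cancel the common positive factor $(q-1)^{k-t}$, leaving
\[
(q-1)\binom{n-t-\epsilon-1}{k-t-\epsilon-1} > \binom{n-t-\epsilon}{k-t-\epsilon}.
\]
The key algebraic step is the identity $\binom{n-t-\epsilon}{k-t-\epsilon} = \frac{n-t-\epsilon}{k-t-\epsilon}\binom{n-t-\epsilon-1}{k-t-\epsilon-1}$; dividing both sides by the strictly positive quantity $\binom{n-t-\epsilon-1}{k-t-\epsilon-1}$ collapses the inequality to the clean threshold $q-1 > \frac{n-t-\epsilon}{k-t-\epsilon}$.

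Finally I would translate this threshold into the two forms stated in the lemma. Adding $1$ to both sides and writing $1=\frac{k-t-\epsilon}{k-t-\epsilon}$ gives $q > \frac{(n-t-\epsilon)+(k-t-\epsilon)}{k-t-\epsilon} = \frac{n+k-2t-2\epsilon}{k-t-\epsilon}$, the first stated form. Alternatively, clearing the positive denominator from $q-1 > \frac{n-t-\epsilon}{k-t-\epsilon}$ yields $(q-1)(k-t-\epsilon) > n-t-\epsilon$, and rewriting $(q-1)(k-t-\epsilon) = (q-2)(k-t-\epsilon)+(k-t-\epsilon)$ and isolating $n$ produces $n < (q-2)(k-t-\epsilon)+k$, the second stated form. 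I do not anticipate a genuine obstacle here: the argument is a direct computation, and the only points requiring care are (i) checking that all binomial coefficients and factors we divide by are strictly positive under the stated hypotheses, so that the chain of equivalences is reversible in both directions, and (ii) confirming that the two displayed threshold expressions are algebraically identical, which the manipulation above makes transparent.
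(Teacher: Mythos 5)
Your proposal is correct and follows essentially the same route as the paper's proof: compute both sizes via Lemma~\ref{lem:anti_size}, cancel the common factor $(q-1)^{k-t}$, apply the identity $\binom{n-t-\epsilon}{k-t-\epsilon} = \frac{n-t-\epsilon}{k-t-\epsilon}\binom{n-t-\epsilon-1}{k-t-\epsilon-1}$ to reduce to $q-1 > \frac{n-t-\epsilon}{k-t-\epsilon}$, and then rearrange into the two stated forms. Your added care about strict positivity of the cancelled factors and the implicit constraint $\epsilon \le k-t-1$ (so that $k-t-\epsilon \ge 1$) is a small but welcome tightening that the paper leaves tacit.
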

\begin{proof}
By Lemma~\ref{lem:diamHier} the diameter of the two anticodes is $2k-2t-\epsilon$.
By Lemma~\ref{lem:anti_size} the size of $\cA_q (t,\epsilon,k,n)$ is $\binom{n-t-\epsilon}{k-t-\epsilon} (q-1)^{k-t}$ and the size of
$\cA_q (t',\epsilon',k,n)$ is $\binom{n-t-\epsilon-1}{k-t-\epsilon-1} (q-1)^{k-t+1}$. Therefore, $\cA_q (t',\epsilon',k,n)$ is larger than
$\cA_q (t,\epsilon,k,n)$ if and only if
$$
\binom{n-t-\epsilon-1}{k-t-\epsilon-1} (q-1)^{k-t+1} > \binom{n-t-\epsilon}{k-t-\epsilon} (q-1)^{k-t}
$$
which is equivalent to
$$
q-1 > \frac{n-t-\epsilon}{k-t-\epsilon}
$$
or
$$
n < (q-2)(k-t-\epsilon)+k ~.
$$
\end{proof}

Lemma~\ref{lem:larger_smaller} implies that we can have two sequences of incomparable anticodes for given
$t$, $k$, and $D=2(k-t)$. For any given $q$ if $n$ is large enough then $\cA_q (t,0,k,n)$ is the maximum size
anticode. However, if $q > \frac{n+k-2t}{k-t}$, then the anticode $\cA_q (t-1,2,k,n)$ is larger
than $\cA_q (t,0,k,n)$. If $q > \frac{n+k-2(t-1)-2}{k-(t-1)-1}$, then the anticode $\cA_q (t-2,4,k,n)$ is larger
than $\cA_q (t-1,2,k,n)$ and as a result also larger than the anticode $\cA_q (t,0,k,n)$.
We can continue and obtain a sequence of $\text{minimum}\{t+1,k-t+1\}$ anticodes (the minimum is due to the fact
that $k-t-\epsilon$ cannot be negative),
where the first anticode is of maximum size if $n$ is large enough.
If $n$ is fixed, then from a certain alphabet size the second anticode is larger.
Similar hierarchy can be defined for odd diameter and the anticodes $\cA_q (t,\epsilon,k,n)$, where $\epsilon$ is odd.
The anticode $\cA_q (t,1,k,n)$ is asymptotically of maximum size and each two anticodes in this hierarchy are incomparable.



\section{Conclusion, Discussion, and Future Research}
\label{sec:conclusion}

This paper considers maximum size anticodes and maximum size $t$-intersecting families over a non-binary alphabet.
Maximality and uniqueness of such anticodes was proved and hierarchy between anticodes
was given. Such different anticodes were discussed also in other papers as follows.

The intersecting family $\cF_q (t,k,n)$ which was proved to be a maximum size $(t,k)_q$-intersecting family
if $n \geq 2k-t$ is large enough (and it was referred to as $\cA'_q(t,k,n)$ or $\cA_q(t,0,k,n)$) was already
defined in~\cite{Etz22}, where it was proved to be a maximum size anticode if a certain structure called a generalized Steiner
system with appropriate parameters exists.

There are other anticodes which were proven to be of maximum size in~\cite{Etz22}. For $1 \leq \epsilon \leq k$, the anticode
$$
\{ ( \bldb_1 \cdots \bldb_{\epsilon} \overbrace{1\cdots \cdots 1}^{{t=k-\epsilon} ~ \text{times}}  \overbrace{0\cdots \cdots 0}^{{n-k} ~ \text{times}} ) ~:~
\bldb_i \in \Z_q \setminus \{0\}, ~ 1 \leq i \leq \epsilon \}.
$$
This anticode is exactly $\cA_q(t,\epsilon,k,n)$, where $t=k-\epsilon$, in our hierarchy.
It is a maximum size anticode when certain structures exist (see~\cite{Etz22} for more details). These structures exist
for many parameters. This anticode is an $(n,\epsilon,k)$ anticode with $(q-1)^\epsilon$ codewords.

Two more anticodes, which are of maximum size for certain parameters, were defined in~\cite{SXK23}. The first one
$$
\{ ( \blda_1 \cdots \blda_\epsilon \bldb_1 \cdots \bldb_{n-\epsilon} ) ~:~
\blda_i \in \Z_q \setminus \{0\}, ~ \wt{\bldb_1 \cdots \bldb_{n-\epsilon}}=k-\epsilon \}
$$
This anticode is exactly $\cA_q(0,\epsilon,k,n)$ in our hierarchy.
It is an $(n,2k-t,k)$ anticodes with $\binom{n-t}{k-t} (q-1)^k$ codewords. This anticode is of maximum size
if $n \geq (k-t+1)(t+1)$ and $q$ is large enough.

The second one
$$
\{ ( \overbrace{0\cdots \cdots 0}^{t ~ \text{times}}  \bldb_1 \cdots \bldb_{n-t} ) ~:~
\wt{\bldb_1 \cdots \bldb_{n-t}}=k \}
$$
is an $(n,n-t,k)$ anticode with $\binom{n-t}{k} (q-1)^k$ codewords. This anticode is of maximum size if $n \leq (k+t-1)(t+1)/t$
and $q$ is large enough.

Other maximum size anticodes mentioned in~\cite{Etz22} are not relevant for our discussion since either $n=k$ or $n=k+1$.
As we see, there are many maximum size anticodes with similar parameters and other large anticodes which we did not prove
their maximality. Each maximum size anticode is of maximum size in different parameters and there are other large
anticodes in different lengths and alphabet size which cannot be compared (each one is larger in different parameters).
Some comparisons and hierarchy between the anticodes were given in this paper and there are more comparisons in~\cite{Etz22},
where also the uniqueness of other anticodes for some given parameters is discussed.
As for future research we would like to prove the maximality of all the anticodes in the hierarchies,
We also would like to know whether such hierarchies exist also for intersecting families.


%


\begin{thebibliography}{99}
\bibitem{AAK01}
    {\sc R. Ahlswede, H. K. Aydinian, and L. H. Khachatrian,}
    {\sl On perfect codes and related concepts,}
    {\em Designs, Codes and Crypto.,} 22 (2001), 221 -- 237.
\bibitem{AhKh97}
    {\sc R. Ahlswede and L. H. Khachatrian,}
    {\sl The complete intersection theorem for systems of finite sets,}
    {\em Europ. J. Combin.,} 18 (1997), 125 -- 136.
\bibitem{AhKh98}
    {\sc R. Ahlswede and L. H. Khachatrian,}
    {\sl The diametric theorem in Hamming spaces -- optimal anticodes,}
    {\em Adv. Appl. Math.,} 20 (1998), 429 -- 449.
\bibitem{BuEt15}
    {\sc S. Buzaglo and T. Etzion,}
    {\sl Bounds on the size of permutation codes with the Kendall's $\tau$-metric,}
    {\em IEEE Trans. Inf. Theory} 61 (2015), 3241--3250.
\bibitem{Del73}
    {\sc P. Delsarte},
    {\sl An algebraic approach to the association schemes of coding theory},
    {\em Philips Res. Repts. Suppl.} 10 (1973), 1--97.
\bibitem{DeFr83}
    {\sc M. Deza and P. Frankl,}
    {\sl Erd\H{o}s-Ko-Rado theorem -- 22 years later,}
    {\em SIAM J. Alg. Disc. Meth.,} 4 (1983), 419 -- 431.
\bibitem{EKR61}
    {\sc P. Erd\"{o}s, C. Ko, and R. Rado,}
    {\sl Intersection theorems for systems of finite sets,}
    {\em Q. J. Math. Oxford,} 12 (1961), 313 -- 320.
\bibitem{Etz11}
    {\sc T. Etzion,}
    {\sl Product constructions for perfect Lee codes,}
    {\em IEEE Trans. Inf. Theory} 57 (2011), 7473--7481.
\bibitem{Etz22}
    {\sc T. Etzion,}
    {\sl Non-binary diameter perfect constant-weight codes,}
    {\em IEEE Trans. Infor. Theory,} 68 (2022), 891 -- 904.
\bibitem{Etz22b}
     {\sc T. Etzion,}
     {\em Perfect Codes and Related Structures},
     {\sl World Scientific, 2022}.
\bibitem{EGRW16}
    {\sc T. Etzion, E. Gorla, A. Ravagnani, and A. Wachter-Zeh,}
    {\sl Optimal Ferrers diagram rank-metric codes,}
    {\em IEEE Trans. Inf. Theory} 62 (2016), 1616--1630.
\bibitem{FrFu80}
    {\sc P. Frankl and Z. F\"{u}redi,}
    {\sl The Erd\H{o}s-Ko-Rado theorem for integer sequences,}
    {\em SIAM J. Alg. Disc. Meth.,} 1 (1980), 376 -- 381.
\bibitem{FrTo99}
    {\sc P. Frankl and N. Tokushige,}
    {\sl The Erd\H{o}s-Ko-Rado theorem for integer sequences,}
    {\em Combinatorica,} 19 (1999), 55 -- 63.
\bibitem{Roo83}
    {\sc C. Roos,}
    {\sl A note on the existence of perfect constant weight codes,}
    {\em Disc. Math.,} 47 (1983), 121 -- 123.
\bibitem{SXK23}
    {\sc M. Shi, Y. Xia, and D. S. Krotov,}
    {\sl A family of diameter perfect constant-weight codes from Steiner systems,}
    {\em J. of Combin. Theory, Ser. A,} 200 (2023), 105700.
\bibitem{TaSc10}
    {\sc I. Tamo and M. Schwartz,}
    {\sl Correcting limited-magnitude errors in the rank-modulation scheme,}
    {\em IEEE Trans. Inf. Theory} 56 (2010), 2551--2560.
\bibitem{Wil84}
    {\sc R. M. Wilson,}
    {\sl The exact bound in the Erd\H{o}s-Ko-Rado theorem,}
    {\em Combinatorica,} 4 (1984), 247 -- 257.
\end{thebibliography}
\end{document}